\newtheorem*{maintheorem}{Main Theorem}
\newtheorem{theorem}{Theorem}[section]
\newtheorem{corollary}[theorem]{Corollary}
\newtheorem{lemma}[theorem]{Lemma}
\newtheorem{proposition}[theorem]{Proposition}
\theoremstyle{definition}
\newtheorem{example}[theorem]{Example}
\newtheorem{remark}[theorem]{Remark}
\numberwithin{equation}{section}
\DeclareMathOperator{\add}{add}
\DeclareMathOperator{\fp}{fp}
\DeclareMathOperator{\Hom}{Hom}
\let\Im\relax\DeclareMathOperator{\Im}{Im}
\DeclareMathOperator{\inj}{inj}
\DeclareMathOperator{\Ker}{Ker}
\DeclareMathOperator{\proj}{proj}
\DeclareMathOperator{\rad}{rad}
\DeclareMathOperator{\Rep}{Rep}
\DeclareMathOperator{\soc}{soc}
\DeclareMathOperator{\supp}{supp}
\newcommand{\1}{\mathds{1}}
\newcommand{\To}{\longrightarrow}
\newcommand{\abs}[1]{\left\lvert#1\right\rvert}
\newcommand{\set}[1]{\left\{#1\right\}}
\title[Injective objects in $\mathrm{fp}(Q)$]
  {Injective objects in the category of finitely
  presented representations of an interval finite quiver}
\subjclass[2010]{18G05, 16G20}
\keywords{Indecomposable injective objects,
  finitely presented representations,
  interval finite quiver}
\date{\today}
\author{Pengjie Jiao}
\address{Department of Mathematics,
  China Jiliang University,
  Hangzhou 310018, PR China}
\email{jiaopjie@cjlu.edu.cn}
\begin{document}

\begin{abstract}
  We characterize the indecomposable injective objects in the category of finitely presented representations of an interval finite quiver.
\end{abstract}

\maketitle

\section{Introduction}

Infinite quivers appear naturally in the covering theory of algebras;
see such as \cite{BongartzGabriel1982Covering,Gabriel1981universal}.
The injective representations of an infinite quiver $Q$ over an arbitrary ring $R$ is studied in \cite{EnochsEstradaGarciaRozas2009Injective}.
We are interested in the category $\fp(Q)$ of finitely presented representations when $R$ is a field.

Recall that $\fp(Q)$ is studied in \cite{ReitenVandenBergh2002Noetherian} when $Q$ is locally finite of certain type.
The result is used to classify the Noetherian Ext-finite hereditary abelian categories with Serre duality.
More generally, when $Q$ is strongly locally finite, the Auslander--Reiten quiver of $\fp(Q)$ is studied in \cite{BautistaLiuPaquette2013Representation}.
The result is used to study the bounded derived category of a finite dimensional algebra with radical square zero in \cite{BautistaLiu2017bounded}.

In the study of Auslander--Reiten theory of $\fp(Q)$, a natural question is how about the injective objects.
We find that we can deal with it when $Q$ is \emph{interval finite}
(i.e., for any vertices $a$ and $b$, the set of paths from $a$ to $b$ is finite).

For each vertex $a$, we denote by $I_a$ the corresponding indecomposable injective representation.
Let $p$ be a left infinite path, i.e., an infinite sequence of arrows $\cdots \alpha_i \cdots \alpha_2 \alpha_1$ with $s(\alpha_{i+1}) = t(\alpha_i)$ for any $i \geq 1$.
Denote by $[p]$ the equivalence class (see page~\pageref{def:equiv} for the definition) of left infinite paths containing $p$.
Consider the indecomposable representation $Y_{[p]}$ introduced in \cite[Section~5]{Jiao2019Projective}.
We have that if $Y_{[p]}$ lies in $\fp(Q)$, then it is an indecomposable injective object; see Proposition~\ref{prop:Yp-inj}.

Moreover, we can classify the indecomposable injective objects in $\fp(Q)$.

\begin{maintheorem}[see~Theorem~\ref{thm:classify}]
  Let $Q$ be an interval finite quiver. Assume $I$ is an indecomposable injective object in $\fp(Q)$. Then either $I \simeq I_a$ for certain vertex $a$, or $I \simeq Y_{[p]}$ for certain left infinite path $p$.
\end{maintheorem}

Compared with \cite[Theorem~6.8]{Jiao2019Projective}, the difficulty here is to characterize when $I_a$ and $Y_{[p]}$ are finitely presented.
The result strengthens a description of finite dimensional indecomposable injective objects in $\fp(Q)$; see \cite[Proposition~1.16]{BautistaLiuPaquette2013Representation}.

The paper is organized as follows.
In Section~2, we recall some basic facts about quivers and representations.
In Section~3, we study the injective objects in $\fp(Q)$ and give the classification theorem.
Some examples are given in Section~4.

\section{Quivers and representations}

Let $k$ be a field, and $Q=(Q_0,Q_1)$ be a quiver, where $Q_0$ is the set of vertices and $Q_1$ is the set of arrows.
For each arrow $\alpha \colon a \to b$, we denote by $s(\alpha) = a$ its source and by $t(\alpha) = b$ its target.

A path $p$ of length $l\geq1$ is a sequence of arrows $\alpha_l \cdots \alpha_2 \alpha_1$ such that $s(\alpha_{i+1}) = t(\alpha_i)$ for any $1 \leq i \leq l-1$. We set $s(p) = s(\alpha_1)$ and $t(p) = t(\alpha_l)$.
We associate each vertex $a$ with a trivial path (of length 0) $e_a$ with $s(e_a) = a = t(e_a)$.
A nontrivial path $p$ is called an oriented cycle if $s(p) = t(p)$.
For any $a, b \in Q_0$, we denote by $Q(a,b)$ the set of paths $p$ from $a$ to $b$, i.e., $s(p) = a$ and $t(p) = b$.

If $Q(a, b) \neq \emptyset$, then $a$ is called a predecessor of $b$, and $b$ is called a successor of $a$.
For $a \in Q_0$, we denote by $a^-$ the set of vertices $b$ with some arrow $b \to a$; by $a^+$ the set of vertices $b$ with some arrow $a \to b$.

A \emph{right infinite path} $p$ is an infinite sequence of arrows $\alpha_1 \alpha_2 \cdots \alpha_n \cdots$ such that $s(\alpha_i) = t(\alpha_{i+1})$ for any $i \geq 1$. We set $t(p) = t(\alpha_1)$.
Dually, a \emph{left infinite path} $p$ is an infinite sequence of arrows $\cdots \alpha_n \cdots \alpha_2 \alpha_1$ such that $s(\alpha_{i+1}) = t(\alpha_i)$ for any $i \geq 1$. We set $s(p) = s(\alpha_1)$.
Here, we use the terminologies in \cite[Section~2.1]{Chen2015Irreducible}.
We mention that these are opposite to the corresponding notions in
\cite[Section~1]{BautistaLiuPaquette2013Representation}.

A representation $M = ( M(a), M(\alpha) )$ of $Q$ over $k$ means a collection of $k$-linear spaces $M(a)$ for every $a \in Q_0$, and a collection of $k$-linear maps $M(\alpha) \colon M(a) \to M(b)$ for every arrow $\alpha \colon a \to b$.
For each nontrivial path $p = \alpha_l \cdots \alpha_2 \alpha_1$, we denote
\(
  M(p) = M(\alpha_l) \circ \cdots \circ M(\alpha_2) \circ M(\alpha_1)
\).
For each $a \in Q_0$, we set $M(e_a) = \1_{M(a)}$.
A morphism $f \colon M \to N$ of representations is a collection of $k$-linear maps $f_a \colon M(a) \to N(a)$ for every $a \in Q_0$, such that $f_b \circ M(\alpha) = N(\alpha) \circ f_a$ for any arrow $\alpha \colon a \to b$.

Let $\Rep(Q)$ be the category of representation of $Q$ over $k$. We denote by $\Hom(M, N)$ the set of morphisms from $M$ to $N$ in $\Rep(Q)$. It is well known that $\Rep(Q)$ is a hereditary abelian category;
see \cite[Section~8.2]{GabrielRoiter1992Representations}.

Recall that a subquiver $Q'$ of $Q$ is called \emph{full} if any arrow $\alpha$ with $s(\alpha), t(\alpha) \in Q'_0$ lies in $Q'$.
Let $M$ be a representation of $Q$.
The \emph{support} $\supp M$ of $M$ is the full subquiver of $Q$ formed by vertices $a$ with $M(a) \neq 0$.
The \emph{socle} $\soc M$ of $M$ is the subrepresentation such that
\(
  (\soc M) (a) = \bigcap_{\alpha \in Q_1, s(\alpha) = a} \Ker M(\alpha)
\)
for any vertex $a$.
The \emph{radical} $\rad M$ of $M$ is the subrepresentation such that
\(
  (\rad M) (a) = \sum_{\alpha \in Q_1, t(\alpha) = a} \Im M(\alpha)
\)
for any vertex $a$.

%

We mention the following fact;
see \cite[Lemma~1.1]{BautistaLiuPaquette2013Representation}.

\begin{lemma}\label{lem:soc-rad}
  If the support of a representation $M$ contains no left infinite paths, then $\soc M$ is essential in $M$.
\end{lemma}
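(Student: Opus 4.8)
The plan is to verify the defining property of an essential subrepresentation directly: I would show that every nonzero subrepresentation $N \subseteq M$ satisfies $N \cap \soc M \neq 0$. Since the intersection of subrepresentations is computed vertexwise, it suffices to produce a single vertex $c$ together with a nonzero element of $N(c) \cap (\soc M)(c)$, that is, a nonzero element of $N$ annihilated by every outgoing arrow.

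First I would pick any nonzero element $x_0 \in N(a_0)$, which exists because $N \neq 0$. Then I would run a greedy descent along outgoing arrows: given a nonzero $x_i \in N(a_i)$, if $x_i \in (\soc M)(a_i)$ the procedure stops; otherwise, by the definition of the socle there is an arrow $\alpha_{i+1} \colon a_i \to a_{i+1}$ with $x_{i+1} := M(\alpha_{i+1})(x_i) \neq 0$. Because $N$ is a subrepresentation, each $x_{i+1}$ again lies in $N(a_{i+1})$, so the procedure stays inside $N$ and produces only nonzero elements.

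The key step is the resulting dichotomy. Either the descent halts after finitely many steps, in which case the last element is a nonzero element of $N$ lying in $\soc M$, giving $N \cap \soc M \neq 0$ as required; or it runs forever. In the latter case the arrows $\alpha_1, \alpha_2, \dots$ satisfy $s(\alpha_{i+1}) = t(\alpha_i)$ and hence assemble into a left infinite path $\cdots \alpha_2 \alpha_1$. All of its vertices $a_0, a_1, \dots$ lie in $\supp M$, since the corresponding $x_i$ are nonzero, and because $\supp M$ is a full subquiver each $\alpha_i$ lies in $\supp M$ as well. Thus $\supp M$ would contain a left infinite path, contradicting the hypothesis, so the descent must terminate.

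I expect the main (indeed the only real) obstacle to be the bookkeeping about orientation: one must confirm that the sequence produced by repeatedly applying \emph{outgoing} maps is a left infinite path in the sense fixed above, namely with $s(\alpha_{i+1}) = t(\alpha_i)$, rather than a right infinite one. Getting this direction right is precisely what makes the no-left-infinite-path hypothesis the correct one to invoke; everything else is a routine termination argument.
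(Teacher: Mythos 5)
Your proposal is correct and is essentially the paper's own argument: the paper simply phrases your greedy descent non-constructively, asserting the existence of a maximal path $p$ from $a$ with $N(p)(x) \neq 0$ and $N(\alpha p)(x) = 0$ for every arrow $\alpha$, whose endpoint yields the nonzero element of $N \cap \soc M$. Your handling of the orientation (that repeatedly following outgoing arrows builds a \emph{left} infinite path) matches the paper's conventions.
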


\begin{proof}
  Let $N$ be a nonzero subrepresentation of $M$. Assume $x \in N(a)$ is nonzero for some vertex $a$. Since $\supp M$ contains no left infinite paths, there exists some path $p$ in $\supp M$ with $s(p) = a$ such that $N(p)(x) \neq 0$ and $N(\alpha p)(x) = 0$ for any arrow $\alpha$ in $Q$. Then $N(p)(x) \in ( N \cap \soc M )(t(p))$. It follows that $\soc M$ is essential in $M$.
\end{proof}

Let $a$ be a vertex in $Q$. We define a representation $P_a$ as follows.
For every vertex $b$, we let
\[
  P_a (b) = \bigoplus_{p \in Q(a,b)} k p.
\]
For every arrow $\alpha \colon b \to b'$, we let
\[
  P_a (\alpha) \colon P_a (b) \To P_a (b'),
  \enskip p \mapsto \alpha p.
\]
Similarly, we define a representation $I_a$ as follows.
For every vertex $b$, we let
\[
  I_a (b)= \Hom_k \biggl( \bigoplus_{p \in Q(b,a)} k p, k \biggr).
\]
For every arrow $\alpha \colon b \to b'$, we let
\[
  I_a (\alpha) \colon I_a (b) \To I_a (b'),
  \enskip f \mapsto ( p \mapsto f( p \alpha ) ).
\]

The following result is well known;
see \cite[Section~3.7]{GabrielRoiter1992Representations}.
It implies that $P_a$ is a projective representation and $I_a$ is an injective representation in $\Rep(Q)$.

\begin{lemma}\label{lem:proj-inj}
  Let $M \in \Rep(Q)$ and $a \in Q_0$.
  \begin{enumerate}
    \item
      The $k$-linear map
      \[
        \eta_M \colon \Hom(P_a, M) \To M(a),
        \enskip f \mapsto f_a (e_a),
      \]
      is an isomorphism natural in $M$.
    \item
      The $k$-linear map
      \[
        \zeta_M \colon \Hom(M, I_a) \To \Hom_k (M(a), k),
        \enskip f \mapsto (x \mapsto f_a (x) (e_a)),
      \]
      is an isomorphism natural in $M$.
  \end{enumerate}
\end{lemma}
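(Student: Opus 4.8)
The plan is to treat both parts as (co)representability statements of Yoneda type: part~(1) says that $P_a$ represents the evaluation functor $M \mapsto M(a)$, and part~(2) is its dual. In each case I would exhibit an explicit two-sided inverse, so that the assertion reduces to a handful of compatibility checks, and then dispatch $k$-linearity and naturality as formal consequences of the constructions. The only genuine bookkeeping lies in part~(2), where the contravariant pairing defining $I_a$ forces one to track the order of path composition.

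For part~(1), the governing observation is that $P_a$ is generated by the trivial path $e_a \in P_a(a)$: every basis element $p \in Q(a,b)$ satisfies $p = P_a(p)(e_a)$. Hence any morphism $f \colon P_a \To M$ is determined by $f_a(e_a)$, since the morphism condition gives $f_b(p) = f_b(P_a(p)(e_a)) = M(p)(f_a(e_a))$; this already yields injectivity of $\eta_M$. For surjectivity I would, given $x \in M(a)$, define $f_b \colon P_a(b) \To M(b)$ on basis elements by $f_b(p) = M(p)(x)$ and extend $k$-linearly; compatibility with an arrow $\alpha$ then reduces to the identity $M(\alpha p) = M(\alpha) \circ M(p)$, and one checks $\eta_M(f) = f_a(e_a) = M(e_a)(x) = x$. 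Naturality in $M$ is immediate, since for $g \colon M \To N$ both composites send $f$ to $g_a(f_a(e_a))$.

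Part~(2) is the dual, and this is where I expect the main obstacle, which is notational rather than conceptual. Given $\phi \in \Hom_k(M(a), k)$, I would define $f_b \colon M(b) \To I_a(b)$ by $f_b(y) = (p \mapsto \phi(M(p)(y)))$ for $p \in Q(b,a)$; verifying that $f$ is a morphism amounts to comparing $f_{b'} \circ M(\alpha)$ and $I_a(\alpha) \circ f_b$ on $y \in M(b)$, both of which, once the definition of $I_a(\alpha)$ is unwound, send $y$ to the functional $p \mapsto \phi(M(p\alpha)(y))$. The step requiring care is injectivity of $\zeta_M$: assuming $f_a(x)(e_a) = 0$ for all $x$, I would show each $f_b$ vanishes by evaluating the functional $f_b(y)$ at an arbitrary path $p \in Q(b,a)$ and invoking the morphism identity $f_a(M(p)(y)) = I_a(p)(f_b(y))$ together with the key relation $I_a(p)(g)(e_a) = g(p)$, which lets me rewrite $f_b(y)(p) = f_a(M(p)(y))(e_a) = \zeta_M(f)(M(p)(y)) = 0$. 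Surjectivity follows since $\zeta_M(f)(x) = f_a(x)(e_a) = \phi(M(e_a)(x)) = \phi(x)$, and $k$-linearity and naturality are again formal. The linchpin throughout is that the arrow action on $I_a$ sends $p$ to $p\alpha$, so the identity $I_a(p)(g)(e_a) = g(p)$ is exactly what collapses the injectivity argument back to the hypothesis on $\phi$.
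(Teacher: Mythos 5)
Your proof is correct and takes essentially the same approach as the paper, which simply exhibits the explicit two-sided inverses $(\eta'_M(x))_b(p) = M(p)(x)$ and $(\zeta'_M(\phi))_b(y)(p) = \phi(M(p)(y))$ and observes that both composites are identities. Your injectivity/surjectivity bookkeeping, including the key identity $I_a(p)(g)(e_a) = g(p)$, just spells out those same verifications in more detail.
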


\begin{proof}
  (1) Consider the $k$-linear map
  \[
    \eta'_M \colon M(a) \To \Hom(P_a, M)
  \]
  given by
  \(
    ( \eta'_M (x) )_b (p) = M(p) (x)
  \)
  for any $x \in M(a)$, $b \in Q_0$ and $p \in Q(a, b)$.
  We observe that
  \(
    \eta'_M \circ \eta_M = \1_{\Hom(P_a, M)}
  \)
  and
  \(
    \eta_M \circ \eta'_M = \1_{M(a)}
  \).
  Then $\eta_M$ is an isomorphism.

  (2) Consider the $k$-linear map
  \[
    \zeta'_M \colon \Hom_k (M(a), k) \To \Hom(M, I_a)
  \]
  given by
  \(
    ( \zeta'_M (f) )_b (x) (p) = f ( M(p)(x) )
  \)
  for any $f \in \Hom_k (M(a), k)$, $b \in Q_0$, $x \in M(b)$ and $p \in Q(b, a)$.
  We observe that
  \(
    \zeta'_M \circ \zeta_M = \1_{\Hom(M, I_a)}
  \)
  and
  \(
    \zeta_M \circ \zeta'_M = \1_{\Hom_k (M(a), k)}
  \).
  It follows that $\zeta_M$ is an isomorphism.
\end{proof}

An epimorphism $P \to M$ with projective $P$ is called a projective cover of $M$ if it is an essential epimorphism. A monomorphism $M \to I$ with injective $I$ is called an injective envelope of $M$ if it is an essential monomorphism.
We mention that two injective envelopes of $M$ are isomorphic.

Given a collection $\mathcal{A}$ of representations, we denote by $\add \mathcal{A}$ the full subcategory of $\Rep(Q)$ formed by direct summands of finite direct sums of representations in $\mathcal{A}$.
We set
\(
  \proj(Q) = \add \set{P_a | a \in Q_0}
\)
and
\(
  \inj(Q) = \add \set{I_a | a \in Q_0}
\).

A representation $M$ is called \emph{finitely generated} if there exists some epimorphism
\(
  f \colon \bigoplus_{i=1}^n P_{a_i} \to M
\),
and is called \emph{finitely presented} if moreover $\Ker f$ is also finitely generated.
We denote by $\fp(Q)$ the subcategory of $\Rep(Q)$ formed by finitely presented representations.

We have the following well-known fact.

\begin{proposition}\label{prop:fp-abel}
  The category $\fp(Q)$ is a hereditary abelian subcategory of $\Rep(Q)$ closed under extensions.
\end{proposition}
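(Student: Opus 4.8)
The plan is to reduce everything to a single coherence statement and then run formal homological algebra, using crucially that $\Rep(Q)$ is hereditary. Write $\proj(Q) = \add \set{P_a \mid a \in Q_0}$ for the finitely generated projectives; by Lemma~\ref{lem:proj-inj}(1) these are projective, and each is finitely presented. The heart of the argument is the following coherence lemma: for any morphism $\phi \colon P \to P'$ between finitely generated projectives, $\Ker \phi$ is again a finitely generated projective. Indeed, $\Im \phi$ is a subobject of the projective $P'$, hence projective because $\Rep(Q)$ is hereditary; therefore the short exact sequence $0 \to \Ker \phi \to P \to \Im \phi \to 0$ splits, and $\Ker \phi$ is a direct summand of the finitely generated $P$, so it is finitely generated (and projective). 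This splitting is the one genuinely non-formal input, and it is exactly the place where hereditariness is indispensable: over a non-hereditary base, finitely presented objects need not form an abelian category.

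From coherence I would first pin down finite presentation homologically. Given $M$ finitely presented and any epimorphism $\pi \colon P \to M$ with $P \in \proj(Q)$, Schanuel's lemma compares $\Ker \pi$ with the kernel of a chosen finite presentation, showing $\Ker \pi$ is finitely generated; since $\Ker \pi \subseteq P$ is projective by hereditariness, $\Ker \pi \in \proj(Q)$. Thus $M$ is finitely presented if and only if it admits a resolution $0 \to P_1 \to P_0 \to M \to 0$ with $P_0, P_1 \in \proj(Q)$; in particular every finitely presented object has projective dimension at most one in $\fp(Q)$.

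Next I would prove closure under kernels and cokernels. For a morphism $f \colon M \to N$ in $\fp(Q)$, choose length-one projective resolutions $0 \to P_1 \xrightarrow{d} P_0 \to M \to 0$ and $0 \to P_1' \xrightarrow{d'} P_0' \to N \to 0$ and lift $f$ to a chain map $(f_0, f_1)$. I would then form the mapping cone, the complex of finitely generated projectives
\[
  0 \To P_1 \xrightarrow{\;\binom{-d}{f_1}\;} P_0 \oplus P_1' \xrightarrow{\;(f_0,\,d')\;} P_0' \To 0,
\]
whose homology is $\Ker f$ in the middle degree and the cokernel of $f$ in the bottom degree (the top homology vanishes since $d$ is monic). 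The cokernel of $f$ is the quotient of the finitely generated projective $P_0'$ by the image of $(f_0, d')$, which is finitely generated, hence it is finitely presented. For $\Ker f$, the coherence lemma applies to $(f_0, d')$ to make $\Ker(f_0, d')$ a finitely generated projective, while $\Ker f$ is the quotient of $\Ker(f_0, d')$ by the finitely generated image of $\binom{-d}{f_1}$; so $\Ker f$ is finitely presented as well. As these kernels and cokernels are computed in $\Rep(Q)$, the full subcategory $\fp(Q)$ is an exact abelian subcategory.

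Finally, closure under extensions follows from the horseshoe lemma: given $0 \to A \to B \to C \to 0$ with $A, C$ finitely presented, lifting their projective presentations yields a presentation of $B$ whose kernel is an extension of two finitely generated objects, hence finitely generated. Combined with the length-one resolutions above, this shows $\fp(Q)$ has enough projectives (namely $\proj(Q)$) and global dimension at most one, i.e.\ it is hereditary. The main obstacle is thus concentrated in the coherence lemma; once that is in hand, abelianness, closure under extensions, and hereditariness are all routine consequences of the mapping-cone and horseshoe constructions.
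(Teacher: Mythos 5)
Your proposal is correct and follows essentially the same route as the paper: the key input in both is the observation that for a morphism between finitely generated projectives the image is projective (by hereditariness of $\Rep(Q)$), so the kernel splits off and lies in $\proj(Q)$, after which abelianness, extension-closure via the horseshoe lemma, and hereditariness follow formally. The only difference is that where the paper delegates the passage from this coherence property to abelianness to \cite[Proposition~2.1]{Auslander1966Coherent}, you reprove it directly with Schanuel's lemma and the mapping cone, which is a correct (if longer) unpacking of the same argument.
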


\begin{proof}
  Let $f \colon P \to P'$ be a morphism in $\proj(Q)$.
  We observe that $\Im f$ is projective since $\Rep(Q)$ is hereditary.
  Then the induced exact sequence
  \[
    0 \To \Ker f \To P \To \Im f \To 0
  \]
  splits. Therefore $\Ker f \in \proj(Q)$.
  It follows from \cite[Proposition~2.1]{Auslander1966Coherent} that $\fp(Q)$ is abelian. We observe by the horseshoe lemma that $\fp(Q)$ is closed under extensions in $\Rep(Q)$. In particular, it is hereditary.
\end{proof}

We mention the following observation.

\begin{lemma}\label{lem:fp-M/N}
  Let $M$ be a finitely presented representation and $N$ be a finitely generated subrepresentation. Then $M/N$ is finitely presented.
\end{lemma}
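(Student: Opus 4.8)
The plan is to manufacture a finite presentation of $M/N$ out of one for $M$, reading off the needed generators and relations from the short exact sequence $0 \To N \To M \To M/N \To 0$. Since $M$ is finitely presented, fix an epimorphism $\pi \colon P_0 \To M$ with $P_0 \in \proj(Q)$ and with $K_0 := \Ker \pi$ finitely generated. Composing $\pi$ with the quotient map $M \To M/N$ yields an epimorphism $\bar\pi \colon P_0 \To M/N$, so $M/N$ is at least finitely generated, and the whole problem reduces to showing that $\Ker \bar\pi$ is finitely generated.

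First I would identify this kernel concretely. By construction $\Ker \bar\pi = \pi^{-1}(N) =: K$, and restricting $\pi$ produces a short exact sequence
\[
  0 \To K_0 \To K \To N \To 0,
\]
since $\pi(K) = N$ and $\Ker(\pi|_K) = \Ker \pi = K_0$ (note $K_0 \subseteq K$ as $0 \in N$). Thus $K$ is an extension of the finitely generated representation $N$ by the finitely generated representation $K_0$, and it suffices to prove that such an extension is itself finitely generated.

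For this I would split the counting of generators across the sequence using projectivity. Choose an epimorphism $g \colon P' \To N$ with $P' \in \proj(Q)$; since $P'$ is projective and $\pi|_K \colon K \To N$ is an epimorphism, $g$ lifts to a morphism $\tilde g \colon P' \To K$. Choose also an epimorphism $h \colon P'' \To K_0$ with $P'' \in \proj(Q)$, and compose with the inclusion $K_0 \hookrightarrow K$. Then the combined morphism $P' \oplus P'' \To K$, $(y,z) \mapsto \tilde g(y) + h(z)$, is an epimorphism: for $x \in K$ with image $\bar x \in N$, picking $y$ with $g(y) = \bar x$ gives $x - \tilde g(y) \in K_0 = \Im h$, whence $x$ lies in the image. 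As $P' \oplus P'' \in \proj(Q)$, the kernel $K$ is finitely generated, and therefore $M/N$ is finitely presented.

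The only real content is the closure of finite generation under extensions carried out in the last paragraph; the identification of $\Ker\bar\pi$ and the reduction are routine. The step that does the work is the lift of $g$ along $\pi|_K$, which leans essentially on the projectivity of $P'$ and on $\pi|_K$ being an epimorphism; I expect this to be the only point requiring care.
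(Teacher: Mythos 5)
Your proof is correct and follows essentially the same route as the paper: compose the given presentation $P_0 \to M$ with the quotient map, identify the kernel of the composite as an extension $0 \to \Ker\pi \to \pi^{-1}(N) \to N \to 0$, and conclude by closure of finite generation under extensions. The paper packages the identification as a pullback/pushout square and leaves the extension-closure step implicit, whereas you compute the preimage directly and spell out the lifting argument; the mathematical content is the same.
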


\begin{proof}
  Let $f \colon P \to M$ be an epimorphism with $P \in \proj(Q)$. Then $\Ker f$ is finitely generated. Denote by $g$ the composition of $f$ and the canonical surjection $M \to M/N$. Consider the following commutative diagram.
  \[\begin{tikzcd}
    0 \rar & \Ker g \rar & P \rar["g"] & M/N \rar & 0 \\
    0 \rar & N      \rar & M \rar      & M/N \rar & 0
    \ar[from = 1-2, to = 2-2, dashed, "h"']
    \ar[from = 1-3, to = 2-3, "f"']
    \ar[from = 1-4, to = 2-4, equal]
  \end{tikzcd}\]
  We observe that the left square is a pushout and also a pullback. Then $h$ is an epimorphism and $\Ker h \simeq \Ker f$. In particular, $\Ker h$ is finitely generated.
  Consider the exact sequence
  \[
    0 \To \Ker h \To \Ker g \overset{h}\To N \To 0.
  \]
  It follows that $\Ker g$ is finitely generated. Then $M/N$ is finitely presented.
\end{proof}

The injective objects in $\fp(Q)$ satisfy the following property.

\begin{lemma}\label{lem:alpha-epi}
  Let $I$ be an injective object in $\fp(Q)$, and let $a \in Q_0$. Assume $p_i$ is a path from $a$ to $b_i$ for $1 \leq i \leq n$ such that $p_i$ is not of the form $u p_j$ with $u \in Q(b_j, b_i)$ for any $j \neq i$.
  Then the $k$-linear map
  \[
    \begin{pmatrix}
      I(p_1) \\
      \vdots \\
      I(p_n) \\
    \end{pmatrix}
    \colon I(a) \To \bigoplus_{i=1}^n I(b_i)
  \]
  is a surjection.
\end{lemma}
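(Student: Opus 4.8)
The plan is to realize the stated map as the image of a monomorphism between finitely presented projectives under the contravariant functor $\Hom(-, I)$, and then to invoke the injectivity of $I$ to obtain surjectivity. First I would introduce, for each $i$, the morphism $\phi_i \colon P_{b_i} \to P_a$ of representations given by right multiplication by $p_i$, that is, $\phi_i$ sends a path $q \in Q(b_i, c)$ to $q p_i \in Q(a, c)$; this is well defined since $t(p_i) = b_i = s(q)$. Assembling these, I obtain a morphism
\[
  \phi = (\phi_1, \ldots, \phi_n) \colon \bigoplus_{i=1}^n P_{b_i} \To P_a,
\]
whose source and target both lie in $\fp(Q)$, being finitely generated projectives.

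The key step, and the one place where the hypothesis is used, is to check that $\phi$ is a monomorphism. At a vertex $c$, the map $\phi$ carries the basis path $q \in Q(b_i, c)$ of the $i$-th summand to the basis path $q p_i \in Q(a, c)$ of $P_a(c)$, so it suffices to show that the assignment $(i, q) \mapsto q p_i$ is injective on paths. I would argue as follows: if $q p_i = q' p_j$ as paths and, say, $\abs{p_i} \geq \abs{p_j}$, then comparing the two paths from their common source $a$ shows that $p_j$ agrees with the initial part of $p_i$, forcing $p_i = u p_j$ for some $u \in Q(b_j, b_i)$; by hypothesis this is only possible when $j = i$, and then cancelling the common suffix $p_i$ gives $q = q'$. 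The case $\abs{p_j} > \abs{p_i}$ is symmetric. Hence $\phi$ sends distinct basis paths to distinct basis paths and is injective at every vertex.

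With the monomorphism in hand, I would set $C = P_a / \Im \phi$ and note that $\Im \phi$ is finitely generated, so $C$ is finitely presented by Lemma~\ref{lem:fp-M/N}. This produces a short exact sequence
\[
  0 \To \bigoplus_{i=1}^n P_{b_i} \overset{\phi}\To P_a \To C \To 0
\]
entirely within $\fp(Q)$. Applying $\Hom(-, I)$ and using that $I$ is injective in $\fp(Q)$, I conclude that the induced map $\Hom(P_a, I) \to \bigoplus_{i=1}^n \Hom(P_{b_i}, I)$, $f \mapsto (f \circ \phi_1, \ldots, f \circ \phi_n)$, is surjective.

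Finally I would transport this surjection through the natural isomorphisms of Lemma~\ref{lem:proj-inj}(1). Under $\Hom(P_a, I) \cong I(a)$ and $\Hom(P_{b_i}, I) \cong I(b_i)$, a direct computation gives $(f \circ \phi_i)_{b_i}(e_{b_i}) = f_{b_i}(p_i) = I(p_i)(f_a(e_a))$, so the induced map is identified with $x \mapsto (I(p_1)(x), \ldots, I(p_n)(x))$, which is exactly the map in the statement. The main obstacle is the monomorphism claim for $\phi$: everything else is formal once that combinatorial fact---precisely where the non-factorization hypothesis on the $p_i$ enters---is established.
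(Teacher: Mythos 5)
Your proposal is correct and follows essentially the same route as the paper: the paper also realizes the map as $\Hom(-,I)$ applied to the monomorphism $\bigoplus_{i=1}^n P_{b_i} \to P_a$ given by $q \mapsto q p_i$, and then identifies $\Hom(P_c, I)$ with $I(c)$ via Lemma~\ref{lem:proj-inj}. Your explicit verification that the non-factorization hypothesis makes this map injective on basis paths is exactly the point the paper leaves implicit.
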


We mention that one can consider the special case that $p_i \colon a \to b_i$ for $1 \leq i \leq n$ are pairwise different arrows.

\begin{proof}
  We observe that the canonical morphism $\bigoplus_{i=1}^n P_{b_i} \to P_a$ induced by inclusions is a monomorphism. The injectivity of $I$ gives a surjection $\Hom(P_a, I) \to \Hom(\bigoplus_{i=1}^n P_{b_i}, I)$. By identifying $\Hom(P_c, I)$ and $I(c)$ for any vertex $c$, we observe that the surjection is precisely the map needed.
\end{proof}

The following fact is a direct consequence.

\begin{corollary}\label{cor:inj-supp-a-}
  The support of an injective object in $\fp(Q)$ is closed under predecessors.
\end{corollary}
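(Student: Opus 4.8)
The plan is to unwind the definition of ``closed under predecessors'' and then invoke the surjectivity supplied by Lemma~\ref{lem:alpha-epi} in its simplest instance. Let $I$ be an injective object in $\fp(Q)$, and suppose $b \in \supp I$, so that $I(b) \neq 0$. Let $a$ be any predecessor of $b$; by definition this means $Q(a,b) \neq \emptyset$, so I may fix a path $p \in Q(a,b)$. The goal is then to show $I(a) \neq 0$, i.e.\ $a \in \supp I$.

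The key step is to apply Lemma~\ref{lem:alpha-epi} with the single path $p_1 = p$ (the case $n = 1$). For $n = 1$ there is no index $j \neq i$, so the hypothesis on the $p_i$ is vacuously satisfied, and the lemma yields that $I(p) \colon I(a) \to I(b)$ is surjective. Since $I(b) \neq 0$, surjectivity forces $I(a) \neq 0$, whence $a \in \supp I$. As $a$ was an arbitrary predecessor of an arbitrary vertex of $\supp I$, this establishes that $\supp I$ is closed under predecessors.

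I do not expect a genuine obstacle here, since the content is entirely packaged into Lemma~\ref{lem:alpha-epi}; the corollary is precisely the observation that a nonzero value $I(b)$ lifts along any path to a nonzero value $I(a)$. The only point requiring a moment's care is checking that the constraint on the family $\{p_i\}$ degenerates trivially when one takes a single path, which is what lets me reduce to the $n = 1$ case.
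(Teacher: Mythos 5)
Your proof is correct and follows essentially the same route as the paper: both reduce the statement to the surjectivity supplied by Lemma~\ref{lem:alpha-epi}. The only (cosmetic) difference is that you apply the lemma once to the whole path $p$ in its $n=1$ instance, whereas the paper applies it to a single arrow and implicitly iterates along the path.
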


\begin{proof}
  Let $I$ be an injective object in $\fp(Q)$. Assume $a$ is a vertex in $\supp I$ and $b \in a^-$. We can choose some arrow $\alpha \colon b \to a$. Lemma~\ref{lem:alpha-epi} implies that $I(\alpha)$ is a surjection. In particular, the vertex $b$ lies in $\supp I$. Then the result follows.
\end{proof}

Recall that $Q$ is called \emph{interval finite} if $Q(a,b)$ is finite for any $a,b \in Q_0$.
A quiver is called \emph{top finite} if there exist finitely many vertices of which every vertex is a successor, and is called \emph{socle finite} if there exist finitely many vertices of which every vertex is a predecessor.

We have the following observation.

\begin{lemma}\label{lem:inf-path}
  A top finite interval finite quiver contains no right infinite paths; a socle finite interval finite quiver contains no left infinite paths.
\end{lemma}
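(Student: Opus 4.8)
The plan is to deduce the second statement from the first by passing to the opposite quiver $Q^{\mathrm{op}}$. Reversing all arrows turns left infinite paths of $Q$ into right infinite paths of $Q^{\mathrm{op}}$, interchanges the socle finite and top finite conditions, and preserves interval finiteness, since $Q(a,b)$ is in bijection with $Q^{\mathrm{op}}(b,a)$. Hence it suffices to prove the first assertion: a top finite interval finite quiver contains no right infinite paths.

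For this I would argue by contradiction. Suppose $Q$ is top finite, witnessed by a finite set $S$ of vertices of which every vertex is a successor, and suppose $Q$ contains a right infinite path $p = \alpha_1 \alpha_2 \cdots$. The crucial feature of the chosen convention is that such a $p$ has a single fixed target $b = t(\alpha_1)$ and recedes infinitely into the predecessors of $b$: setting $v_i = s(\alpha_i)$, each truncation $\alpha_1 \cdots \alpha_i$ is an honest path from $v_i$ to $b$ of length $i$.

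Next I would invoke top finiteness. Each $v_i$ is a successor of some vertex of $S$, so there is a path from some element of $S$ to $v_i$. As $S$ is finite while there are infinitely many indices $i$, the pigeonhole principle yields a single vertex $s \in S$ and an infinite set of indices $i$, for each of which there is a path $w_i$ from $s$ to $v_i$. Composing, $\alpha_1 \cdots \alpha_i w_i$ is a path from $s$ to $b$ of length at least $i$. Letting $i$ range over this infinite index set produces paths in $Q(s,b)$ of unbounded length, hence infinitely many distinct elements of $Q(s,b)$, contradicting interval finiteness.

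The point that most needs care is the bookkeeping forced by the direction conventions, namely that a right infinite path accumulates at a fixed target rather than a fixed source, so that its truncations genuinely all land at the same vertex $b$. The second subtlety is insisting that the pigeonhole step selects a single source vertex $s$, so that all the constructed paths lie in one set $Q(s,b)$; distinctness of these paths is then automatic from their growing lengths, and no separate appeal to acyclicity is required.
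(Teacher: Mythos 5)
Your proof is correct and is essentially the paper's argument: both compose the truncations $\alpha_1\cdots\alpha_i$ of the right infinite path with paths from the finitely many top-finiteness witnesses and derive a contradiction between top finiteness and the finiteness of a single set $Q(s,b)$ (the paper phrases the same count contrapositively, showing some $a_Z$ fails to be a successor of any witness, and likewise handles the dual statement by symmetry). No gaps; your bookkeeping of the direction conventions and the pigeonhole/unbounded-length step are both sound.
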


\begin{proof}
  Let $Q$ be a top finite interval finite quiver. Then there exist some vertices $b_1, b_2, \dots, b_n$ such that any vertex is a successor of some $b_i$. Assume $Q$ contains a right infinite path $\alpha_1 \alpha_2 \cdots \alpha_j \cdots$. For each $j \geq 0$, we set $a_j = t(\alpha_{j+1})$.
  Since $Q(b_i, a_0)$ is finite, there exists some nonnegative integer $Z_i$ such that $Q(b_i, a_j) = \emptyset$ for any $j \geq Z_i$. Let $Z = \max_{1 \leq i \leq n} Z_i$. Then $a_Z$ is not a successor of any $b_i$, which is a contradiction. It follows that $Q$ contains no right infinite paths.

  Similarly, a socle finite interval finite quiver contains no left infinite paths.
\end{proof}

\section{Finitely presented representations}

Let $k$ be a field and $Q$ be an interval finite quiver.

Recall that a representation $M$ is called \emph{pointwise finite dimensional} if $M(a)$ is finite dimensional for any vertex $a$, and is called \emph{finite dimensional} if moreover $\supp M$ contains only finitely many vertices.

We mention the following fact.

\begin{lemma}\label{lem:fp-pf}
  The abelian category $\fp(Q)$ is Hom-finite Krull--Schmidt, and every object is pointwise finite dimensional.
\end{lemma}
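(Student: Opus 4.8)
The plan is to establish the three assertions in turn — pointwise finite dimensionality, Hom-finiteness, and the Krull--Schmidt property — with each step feeding the next, the whole argument being driven by interval finiteness through the adjunction of Lemma~\ref{lem:proj-inj}.

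First I would prove that every object of $\fp(Q)$ is pointwise finite dimensional, which is the one place interval finiteness is genuinely used. For any vertices $a, b$, the space $P_a(b) = \bigoplus_{p \in Q(a,b)} kp$ has dimension $\abs{Q(a,b)}$, which is finite by hypothesis; hence each $P_a$ is pointwise finite dimensional, and so is any finite direct sum $\bigoplus_{i=1}^n P_{a_i}$. Given $M \in \fp(Q)$, there is an epimorphism $\bigoplus_{i=1}^n P_{a_i} \to M$, so $M(b)$ is a quotient of $\bigoplus_{i=1}^n P_{a_i}(b)$ for each vertex $b$, and therefore finite dimensional.

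Next I would deduce that $\fp(Q)$ is Hom-finite. By Lemma~\ref{lem:proj-inj}(1) we have $\Hom(P_a, N) \simeq N(a)$ for any $N$, and the previous step shows this is finite dimensional when $N \in \fp(Q)$. For $M, N \in \fp(Q)$, fix an epimorphism $\bigoplus_{i=1}^n P_{a_i} \to M$; applying the contravariant left exact functor $\Hom(-, N)$ yields a monomorphism $\Hom(M, N) \hookrightarrow \bigoplus_{i=1}^n \Hom(P_{a_i}, N) \simeq \bigoplus_{i=1}^n N(a_i)$. The right-hand side is finite dimensional, so $\Hom(M, N)$ is too.

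Finally, for the Krull--Schmidt property I would invoke the standard criterion that a $k$-linear, Hom-finite additive category in which idempotents split is Krull--Schmidt: idempotents split in $\fp(Q)$ because it is abelian (Proposition~\ref{prop:fp-abel}), and each endomorphism ring $\Hom(M, M)$ is a finite-dimensional $k$-algebra by the previous step, hence semiperfect. The main obstacle is not computational but is really just correctly citing this abstract criterion for Krull--Schmidt; the mathematical content of all three parts collapses, via Lemma~\ref{lem:proj-inj}, onto the single observation that interval finiteness makes the projectives $P_a$ pointwise finite dimensional.
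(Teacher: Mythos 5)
Your proposal is correct and follows essentially the same route as the paper: pointwise finite dimensionality from interval finiteness of the $P_a$, then Hom-finiteness by embedding $\Hom(M,N)$ into a Hom space out of projectives, then Krull--Schmidt from Hom-finiteness. The only cosmetic difference is that you identify $\Hom(P_{a_i},N)\simeq N(a_i)$ directly, whereas the paper passes through a presentation of $N$ as well; both work, and you spell out the Krull--Schmidt criterion slightly more explicitly than the paper does.
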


\begin{proof}
  Assume $\bigoplus_{i=1}^m P_{a_i} \to M$ is an epimorphism. We observe that each $P_{a_i}$ is pointwise finite dimensional, since $Q$ is interval finite. Then so is $M$.

  Moreover, assume $\bigoplus_{j=1}^n P_{b_j} \to N$ is an epimorphism.
  Consider the maps
  \[
    \Hom(M, N)
    \hookrightarrow
    \Hom\biggl(\bigoplus_{i=1}^m P_{a_i}, N\biggr)
    \twoheadleftarrow
    \Hom\biggl(\bigoplus_{i=1}^m P_{a_i}, \bigoplus_{j=1}^n P_{b_j}\biggr).
  \]
  We observe that
  \(
    \Hom(\bigoplus_{i=1}^m P_{a_i}, \bigoplus_{j=1}^n P_{b_j})
  \)
  is finite dimensional since $Q$ is interval finite. Then so is $\Hom(M, N)$.
  Therefore the abelian category $\fp(Q)$ is Hom-finite, and hence is Krull--Schmidt.
\end{proof}

We need the following properties of finitely presented representations.

\begin{lemma}\label{lem:fp-supp}
  Let $M$ be a finitely presented representation.
  \begin{enumerate}
    \item
      $\supp M$ is top finite.
    \item
      \(
        \bigcup_{a \in \supp M} a^+ \setminus \supp M
      \)
      is finite.
  \end{enumerate}
\end{lemma}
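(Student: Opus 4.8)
The plan is to work with an explicit finite presentation and reduce both statements to a path-counting argument. Fix an epimorphism $f\colon P\to M$ with $P=\bigoplus_{i=1}^{n}P_{a_i}$ and $K:=\Ker f$ finitely generated, and choose generators $y_1,\dots,y_m$ of $K$ with $y_j\in K(b_j)\subseteq P(b_j)$. By Lemma~\ref{lem:proj-inj}(1), writing $x_i=f_{a_i}(e_{a_i})\in M(a_i)$, the space $M(c)$ is spanned by the elements $M(p)(x_i)$ with $p\in Q(a_i,c)$; likewise $K(c)$ is spanned by the translates $P(u)(y_j)$ with $u\in Q(b_j,c)$. These two descriptions carry the whole argument, and I note in advance that (1) needs only that $f$ is an epimorphism, while (2) is where finite generation of $K$ is essential.

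For (1), suppose $c\in\supp M$, so some $M(p)(x_i)\neq0$ with $p=\alpha_l\cdots\alpha_1\in Q(a_i,c)$. No partial composite $M(\alpha_s\cdots\alpha_1)(x_i)$ can vanish, so every vertex occurring along $p$ lies in $\supp M$ and in particular $a_i\in\supp M$. Thus, inside the full subquiver $\supp M$, every vertex is a successor of one of the finitely many vertices $a_i$ with $M(a_i)\neq0$, which is exactly the assertion that $\supp M$ is top finite.

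For (2), I expect the crux to be showing that the boundary $\partial:=\bigcup_{a\in\supp M}a^{+}\setminus\supp M$ is contained in the finite set $\{b_1,\dots,b_m\}$. Fix $b\in\partial$; then $M(b)=0$, i.e.\ $K(b)=P(b)$, and there is an arrow $\alpha\colon a\to b$ with $a\in\supp M$, i.e.\ $K(a)\subsetneq P(a)$. The map $P(\alpha)\colon P(a)\to P(b)$, $q\mapsto\alpha q$, identifies $P(a)$ with the span of the paths into $b$ whose last arrow is $\alpha$, so it admits the retraction $\rho\colon P(b)\to P(a)$ that deletes a final $\alpha$ and kills every path not ending in $\alpha$; thus $\rho\circ P(\alpha)=\1_{P(a)}$ and $\rho$ is surjective. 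Applying $\rho$ to the equality $K(b)=P(b)$ yields $P(a)=\rho(K(b))$.

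I would then evaluate $\rho$ on the spanning set $\{P(u)(y_j)\}$ of $K(b)$. If $u$ is nontrivial and ends in $\alpha$, say $u=\alpha u'$, then $\rho(P(u)(y_j))=P(u')(y_j)\in K(a)$; if $u$ is nontrivial and does not end in $\alpha$, then $\rho(P(u)(y_j))=0$; and the only other possibility is the trivial path $u=e_{b_j}$, which arises exactly when $b=b_j$. Hence $P(a)=\rho(K(b))\subseteq K(a)+\mathrm{span}\{\rho(y_j)\mid b_j=b\}$, and since $K(a)\subsetneq P(a)$ the right-hand span cannot be empty, forcing $b\in\{b_1,\dots,b_m\}$. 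This proves $\partial$ finite. The main obstacle is precisely this last computation: one must correctly isolate which translates $P(u)(y_j)$ survive $\rho$, and the decisive case is the trivial path $u=e_{b_j}$, where the final arrow of $P(u)(y_j)=y_j$ is governed by $y_j$ rather than by $u$. It is exactly this case that pins a boundary vertex to a generator vertex $b_j$, and thereby makes finite presentation, rather than mere finite generation, do the work.
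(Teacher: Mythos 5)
Your proof is correct, and for part (1) it coincides with the paper's (every vertex of $\supp M$ is a successor of some $a_i$). For part (2) you reach the same underlying fact by a more explicit route. The paper argues via the top of the kernel: it observes that $\bigl(\Ker f/\rad\Ker f\bigr)(b)\neq 0$ for every boundary vertex $b$, so that an infinite boundary would make the semisimple quotient $\Ker f/\rad\Ker f$ non--finitely generated. Your argument unwinds exactly that observation: choosing generators $y_j\in K(b_j)$ and the retraction $\rho$ that strips a final $\alpha$, you show directly that a boundary vertex $b$ with no generator sitting at it would force $P(a)=\rho(K(b))\subseteq K(a)$, contradicting $a\in\supp M$; equivalently, $K(b)\neq(\rad K)(b)$ precisely because the $\alpha$-component of $P(b)$ cannot be filled by translates of generators located elsewhere. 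So your conclusion $\partial\subseteq\{b_1,\dots,b_m\}$ is a sharper, fully spelled-out version of the paper's "we observe" step: it costs a careful case analysis of which translates $P(u)(y_j)$ survive $\rho$ (the decisive case being $u=e_{b_j}$), and it buys an explicit bound on the boundary by the generator vertices rather than just finiteness. Both proofs use finite generation of $\Ker f$ in the same essential way.
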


\begin{proof}
  (1) Assume
  \(
    f \colon \bigoplus_{i=1}^m P_{a_i} \to M
  \)
  is an epimorphism.
  Then every vertex in $\supp M$ is a successor of some $a_i$. In other words, $\supp M$ is top finite.

  (2) Denote
  \(
    \Delta = \bigcup_{a \in \supp M} a^+ \setminus \supp M
  \).
  We observe that $\Ker f / \rad \Ker f$ is semisimple and
  \(
    ( \Ker f / \rad \Ker f ) (b) \neq 0
  \)
  for any $b \in \Delta$.
  If $\Delta$ is not finite, then $\Ker f / \rad \Ker f$ is not finitely generated, which is a contradiction.
  It follows that $\Delta$ is finite.
\end{proof}

\begin{corollary}\label{cor:fd-fp}
  A finite dimensional representation $M$ is finitely presented if and only if $a^+$ is finite for any vertex $a$ in $\supp M$.
\end{corollary}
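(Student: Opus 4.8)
For the forward implication, suppose $M$ is finite dimensional and finitely presented. Then $\supp M$ consists of finitely many vertices, and Lemma~\ref{lem:fp-supp}(2) tells us that $\bigcup_{a \in \supp M} a^+ \setminus \supp M$ is finite. The plan here is a one-line set-theoretic observation: for each $a \in \supp M$ we have $a^+ \subseteq \supp M \cup \bigl( \bigcup_{b \in \supp M} b^+ \setminus \supp M \bigr)$, a union of two finite sets, so $a^+$ is finite. No further work is needed in this direction.

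The reverse implication carries the real content. Assuming $a^+$ is finite for every $a \in \supp M$, I would show $M$ is finitely presented by induction on $\dim_k M$, the case $M = 0$ being trivial. The inductive step splits off a simple. First I would record that interval finiteness forces $Q$ to be acyclic, since an oriented cycle through a vertex $a$ would make $Q(a,a)$ infinite; consequently the finite subquiver $\supp M$ contains no left infinite paths. By Lemma~\ref{lem:soc-rad}, $\soc M$ is then essential in $M$, hence nonzero whenever $M \neq 0$. As $\soc M$ is semisimple with $\supp(\soc M) \subseteq \supp M$, I can pick a simple subrepresentation $S_a \hookrightarrow M$ with $a \in \supp M$, producing a short exact sequence $0 \to S_a \to M \to M'' \to 0$ with $\dim_k M'' < \dim_k M$ and $\supp M'' \subseteq \supp M$. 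The induction hypothesis applies to $M''$ (its relevant vertices still have finite $a^+$), so $M''$ is finitely presented, and since $\fp(Q)$ is closed under extensions by Proposition~\ref{prop:fp-abel}, it remains only to verify that $S_a$ itself is finitely presented.

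This last verification is the key sub-step. I would use the exact sequence $0 \to \rad P_a \to P_a \to S_a \to 0$ together with the identification $\rad P_a \cong \bigoplus_{\alpha \in Q_1,\, s(\alpha) = a} P_{t(\alpha)}$, obtained by sorting the positive-length paths starting at $a$ according to their first arrow. Because $a^+$ is finite and each $Q(a,b)$ is finite by interval finiteness, there are only finitely many arrows with source $a$; hence $\rad P_a$ is a finite direct sum of indecomposable projectives, so it is finitely generated and $S_a$ is finitely presented.

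The main obstacle is precisely this reverse direction, and within it the two ingredients that make the induction run: securing a simple subrepresentation (which rests on acyclicity and Lemma~\ref{lem:soc-rad}) and converting the hypothesis on $a^+$ into finite generation of $\rad P_a$. Once both are established, the induction together with extension-closedness of $\fp(Q)$ completes the argument.
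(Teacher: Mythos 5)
Your proof is correct, but in the sufficiency direction it takes a genuinely different route from the paper's (the necessity direction is the same one-line combination of Lemma~\ref{lem:fp-supp}(2) with the finiteness of $\supp M$). The paper argues in one step: fix an epimorphism $f \colon P \to M$ with $P \in \proj(Q)$, let $N \subseteq \Ker f$ be the subrepresentation generated by the spaces $P(b)$ with $b$ ranging over the finite boundary set $\bigcup_{a \in \supp M} a^+ \setminus \supp M$ (finitely generated because each $P(b)$ is finite dimensional), note that $\Ker f / N$ has support inside the finite quiver $\supp M$ and is therefore finite dimensional, hence finitely generated, and conclude from $0 \to N \to \Ker f \to \Ker f / N \to 0$ that $\Ker f$ is finitely generated. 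You instead induct on $\dim_k M$, peeling off a simple subrepresentation supplied by Lemma~\ref{lem:soc-rad} --- which applies because interval finiteness forces acyclicity, so the finite quiver $\supp M$ has no left infinite paths --- and reduce to the presentation $0 \to \bigoplus_{s(\alpha)=a} P_{t(\alpha)} \to P_a \to S_a \to 0$, where finiteness of $a^+$ together with interval finiteness of $Q$ gives finitely many arrows out of $a$; all the steps check out, including the identification of $\rad P_a$ by sorting nontrivial paths by their first arrow. Your version costs the extra inputs of acyclicity, essentiality of the socle, and extension-closedness of $\fp(Q)$ (Proposition~\ref{prop:fp-abel}), but in exchange it exhibits the explicit finite projective presentation of each simple $S_a$ with $a^+$ finite, a fact of independent use; the paper's construction is shorter and needs only the single short exact sequence for $\Ker f$.
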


\begin{proof}
  For the necessary, we assume $M$ is finitely presented and $a$ is a vertex in $\supp M$.
  Lemma~\ref{lem:fp-supp} implies that $a^+ \setminus \supp M$ is finite. Since $\supp M$ contains only finitely many vertices, then $a^+ \cap \supp M$ is finite. It follows that $a^+$ is finite.

  For the sufficiency, we assume $a^+$ is finite for any vertex $a$ in $\supp M$.
  Since $M$ is finite dimensional, there exists some epimorphism $f \colon P \to M$ with $P \in \proj(Q)$.

  Consider the subrepresentation $N$ of $P$ generated by $P(b)$, where $b$ runs over
  \(
    \bigcup_{a \in \supp M} a^+ \setminus \supp M
  \).
  Then $N$ is contained in $\Ker f$.
  We observe by Lemma~\ref{lem:fp-pf} each $P(b)$ is finite dimensional. Then $N$ is finitely generated.

  Consider the factor module $\Ker f / N$. Its support is contained in $\supp M$. Then it is finite dimensional and hence is finitely generated.
  Consider the exact sequence
  \[
    0 \To N \To \Ker f \To \Ker f / N \To 0.
  \]
  It follows that $\Ker f$ is finitely generated, and then $M$ is finitely presented.
\end{proof}

\begin{corollary}\label{cor:Ia-fp}
  Let $a$ be a vertex. Then $I_a$ is finitely presented if and only if
  $a$ admits only finitely many predecessors $b$ and each $b^+$ is finite.
\end{corollary}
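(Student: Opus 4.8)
The plan is to reduce the whole statement to the finite-dimensional criterion of Corollary~\ref{cor:fd-fp}, after first identifying $\supp I_a$ explicitly. The starting observation is that $I_a(b) = \Hom_k(\bigoplus_{p \in Q(b,a)} kp, k)$ is nonzero precisely when $Q(b,a) \neq \emptyset$, so $\supp I_a$ is exactly the full subquiver on the predecessors of $a$ (with $a$ itself included via $e_a$). Hence $I_a$ is finite dimensional if and only if $a$ has only finitely many predecessors, and in that case the condition ``$b^+$ is finite for every $b \in \supp I_a$'' appearing in Corollary~\ref{cor:fd-fp} is literally the condition ``each $b^+$ is finite'' as $b$ ranges over the predecessors of $a$.

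For the sufficiency I would argue directly: if $a$ has finitely many predecessors and each $b^+$ is finite, then $\supp I_a$ has finitely many vertices, so $I_a$ is finite dimensional, and Corollary~\ref{cor:fd-fp} immediately gives that $I_a$ is finitely presented.

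For the necessity the key point is to upgrade ``finitely presented'' to ``finite dimensional'', after which Corollary~\ref{cor:fd-fp} applies once more. So suppose $I_a$ is finitely presented. By Lemma~\ref{lem:fp-supp}(1), $\supp I_a$ is top finite, so there are finitely many vertices $c_1, \dots, c_n$ of $\supp I_a$ (hence predecessors of $a$) such that every vertex of $\supp I_a$, i.e.\ every predecessor of $a$, is a successor of some $c_i$. I would now invoke interval finiteness. Given any predecessor $v$ of $a$, choose $c_i$ with a path $c_i \to v$; composing it with a path $v \to a$ produces an element of $Q(c_i, a)$ on which $v$ occurs as a vertex. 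Since $Q(c_i, a)$ is finite and each of its members is a path of finite length, only finitely many vertices occur among the paths in $Q(c_i, a)$. Therefore the set of predecessors of $a$ is contained in the finite union over $i$ of these vertex sets, and is finite. Consequently $I_a$ is finite dimensional, and Corollary~\ref{cor:fd-fp} then forces each $b^+$ to be finite, which completes the necessity.

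I expect the only real content to be the combinatorial step in the necessity, namely turning top finiteness of $\supp I_a$ into finiteness of the predecessor set by means of interval finiteness; the remainder is bookkeeping around the finite-dimensional criterion, so the care needed is mainly in matching the two quantifiers (``finitely many predecessors'' versus ``each $b^+$ finite'') against the hypotheses of Corollary~\ref{cor:fd-fp}.
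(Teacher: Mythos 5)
Your proposal is correct and follows essentially the same route as the paper: identify $\supp I_a$ with the set of predecessors of $a$, use interval finiteness to pass from finite generation (equivalently, top finiteness of the support) to finiteness of the support, and then invoke Corollary~\ref{cor:fd-fp}. The paper merely compresses the combinatorial step you spell out into the single remark that $I_a$ is finitely generated if and only if $\supp I_a$ has finitely many vertices, ``since $Q$ is interval finite.''
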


\begin{proof}
  We observe that $I_a$ is finitely generated if and only if $\supp I$ contains only finitely many vertices, since $Q$ is interval finite.
  The vertices in $\supp I$ are precisely predecessors of $a$.
  Then the result follows from Corollary~\ref{cor:fd-fp}.
\end{proof}

The support of an injective object in $\fp(Q)$ satisfies the following conditions.

\begin{lemma}\label{lem:inj-supp-f}
  Let $I$ be an injective object in $\fp(Q)$.
  \begin{enumerate}
    \item
      $a^- \cup a^+$ is finite for any vertex $a$ in $\supp I$.
    \item
      If $\supp I$ contains no left infinite paths, then it contains only finitely many vertices.
  \end{enumerate}
\end{lemma}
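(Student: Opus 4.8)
The plan is to establish the two finiteness statements in turn, using part~(1) as an ingredient for part~(2). Throughout I would use that $I$ is pointwise finite dimensional (Lemma~\ref{lem:fp-pf}), that $\supp I$ is top finite with $\bigcup_{c \in \supp I} c^+ \setminus \supp I$ finite (Lemma~\ref{lem:fp-supp}), and that $\supp I$ is closed under predecessors (Corollary~\ref{cor:inj-supp-a-}).

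For part~(1), fix $a \in \supp I$ and bound $a^+$ by splitting it as $(a^+ \cap \supp I) \cup (a^+ \setminus \supp I)$. The second set is finite by Lemma~\ref{lem:fp-supp}(2). For the first, suppose it contained distinct vertices $b_1, \dots, b_n$ (so $I(b_i) \neq 0$), and choose an arrow $\alpha_i \colon a \to b_i$ for each. As these are pairwise different arrows, the special case of Lemma~\ref{lem:alpha-epi} gives a surjection $I(a) \to \bigoplus_{i=1}^n I(b_i)$, whence $\dim_k I(a) \geq n$; since $I(a)$ is finite dimensional, $n$ is bounded and $a^+ \cap \supp I$ is finite. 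To bound $a^-$, note that every $b \in a^-$ is a predecessor of $a$, hence lies in $\supp I$. Using top finiteness, fix finitely many vertices $a_1, \dots, a_m$ of which every vertex of $\supp I$ is a successor. For each $b \in a^-$ choose an arrow $\beta_b \colon b \to a$ and a path $u_b$ from some $a_i$ to $b$; then $\beta_b u_b$ is a path from $a_i$ to $a$ with terminal arrow $\beta_b$. The assignment $b \mapsto \beta_b u_b$ is injective into the set $\bigcup_{i=1}^m Q(a_i, a)$, which is finite by interval finiteness, because $b$ is recovered as the source of the terminal arrow. Hence $a^-$ is finite.

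For part~(2), I would combine top finiteness with part~(1). By Lemma~\ref{lem:fp-supp}(1) there are finitely many vertices $a_1, \dots, a_m$ of which every vertex of $\supp I$ is a successor, so it suffices to show each $a_i$ has only finitely many successors in $\supp I$. Fix $c = a_i$ and consider the set of paths in $\supp I$ with source $c$, ordered by one-arrow extension; this forms a tree. Since $\supp I$ is full and closed under predecessors, every path from $c$ to a vertex of $\supp I$ stays inside $\supp I$, so the successors of $c$ in $\supp I$ are exactly the endpoints of such paths. By part~(1) each vertex of $\supp I$ has finite out-degree, so the tree is finitely branching. If $c$ had infinitely many successors the tree would be infinite, and K\"onig's lemma would produce an infinite forward path starting at $c$, that is, a left infinite path contained in $\supp I$, contradicting the hypothesis. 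Therefore each $a_i$ has finitely many successors in $\supp I$, and $\supp I$ is finite.

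I expect the main obstacle to be part~(2): one must correctly convert the absence of left infinite paths into genuine finiteness, which needs both the finite out-degree supplied by part~(1) and a K\"onig-type compactness argument on the tree of paths issuing from the finitely many top vertices. The other delicate point is bounding $a^-$ in part~(1), since this is \emph{not} obtained from the surjectivity of Lemma~\ref{lem:alpha-epi} directly but rather by routing the predecessors through the finitely many tops and invoking interval finiteness.
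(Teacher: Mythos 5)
Your proposal is correct and follows essentially the same route as the paper: bounding $a^+\cap\supp I$ via Lemma~\ref{lem:alpha-epi} and pointwise finite dimensionality, bounding $a^-$ by routing predecessors through the finitely many top vertices and invoking interval finiteness, and deducing (2) by a K\"onig-type argument (which the paper carries out by hand, inductively choosing at each step a vertex with infinitely many successors to build a left infinite path). The only cosmetic difference is that you cite K\"onig's lemma on the tree of paths rather than reproving it inline.
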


\begin{proof}
  Lemma~\ref{lem:fp-supp} implies that $\supp I$ is top finite. Then there exist some vertices $b_1, b_2, \dots, b_n$ such that any vertex in $\supp I$ is a successor of some $b_i$.

  (1) We observe that $\supp I$ is closed under predecessors; see Corollary~\ref{cor:inj-supp-a-}. Then any vertex in $a^-$ is a successor of some $b_i$.
  If $a^-$ is infinite, then at least one $Q(b_i, a)$ is infinite, which is a contradiction. It follows that $a^-$ is finite.

  We observe that $a^+ \cap \supp I$ is finite.
  Indeed, otherwise Lemma~\ref{lem:alpha-epi} implies that $I(a)$ is not finite dimensional, which is a contradiction. Since $a^+ \setminus \supp I$ is finite by Lemma~\ref{lem:fp-supp}, then $a^+$ is finite. It follows that $a^- \cup a^+$ is finite.

  (2) Assume the vertices in $\supp I$ is infinite. Then there exists some $b_i$ whose successors contained in $\supp I$ is infinite. Denote it by $a_0$. Since $a_0^+ \cap \supp I$ is finite, then there exists some $a_1 \in a_0^+ \cap \supp I$ whose successors contained in $\supp I$ is infinite. Choose some arrow $\alpha_1 \colon a_0 \to a_1$.

  By induction, we obtain vertices $a_i$ and arrows $\alpha_{i+1} \colon a_i \to a_{i+1}$ for $i \geq 0$ in $\supp M$. This is a contradiction, since $\cdots \alpha_i \cdots \alpha_2 \alpha_1$ is a left infinite path in $\supp M$.
  It follows that $\supp M$ contains only finitely many vertices.
\end{proof}

For an injective object in $\fp(Q)$ whose support contains no left infinite paths, we have the following characterization.

\begin{proposition}\label{prop:inj(Q)}
  Let $I$ be an injective object in $\fp(Q)$ such that $\supp I$ contains no left infinite paths.
  Then
  \[
    I \simeq \bigoplus_{a \in Q_0} I_a^{\oplus \dim (\soc I) (a)}.
  \]
\end{proposition}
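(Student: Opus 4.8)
The plan is to identify $I$ with the injective envelope of its socle, realized explicitly as a direct sum of the $I_a$. Set $E = \bigoplus_{a \in Q_0} I_a^{\oplus \dim (\soc I)(a)}$. Since $\supp I$ contains no left infinite paths, Lemma~\ref{lem:inj-supp-f}(2) shows that $\supp I$ has only finitely many vertices, so by Lemma~\ref{lem:fp-pf} the representation $I$ is finite dimensional; in particular $\soc I$ has finite support and finite dimension, and the sum defining $E$ is finite.

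First I would verify that $E$ lies in $\fp(Q)$, the step that bridges the two ambient categories. Each index $a$ with $(\soc I)(a) \neq 0$ lies in $\supp I$; by Corollary~\ref{cor:inj-supp-a-} every predecessor of $a$ again lies in $\supp I$, so $a$ has only finitely many predecessors, and Lemma~\ref{lem:inj-supp-f}(1) gives that each such predecessor $b$ has $b^+$ finite. Corollary~\ref{cor:Ia-fp} then yields $I_a \in \fp(Q)$, and a finite direct sum stays in $\fp(Q)$. Each $I_a$ is injective in $\Rep(Q)$ by Lemma~\ref{lem:proj-inj}(2), hence so is $E$; being an injective object of $\Rep(Q)$ that lies in the full abelian subcategory $\fp(Q)$ (Proposition~\ref{prop:fp-abel}), it is also injective in $\fp(Q)$. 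Next I would record that $E$ is the injective envelope of $\soc I$: using Lemma~\ref{lem:proj-inj}(2) one computes $\Hom(S_b, I_a) \cong \Hom_k(S_b(a), k)$, which is nonzero exactly when $b = a$; hence $\soc I_a$ is the simple $S_a$ and $I_a$ is the injective envelope of $S_a$. Therefore $\soc I \hookrightarrow E$ is an injective envelope, and in particular $\soc I$ is essential in $E$.

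The comparison then runs as follows. By Lemma~\ref{lem:soc-rad} the socle $\soc I$ is essential in $I$. Extending the inclusion $\soc I \hookrightarrow E$ along $\soc I \hookrightarrow I$ via injectivity of $E$ produces a morphism $g \colon I \to E$ restricting to the inclusion on $\soc I$; since $\Ker g \cap \soc I = 0$ and $\soc I$ is essential in $I$, the map $g$ is a monomorphism. As $I$ is injective in $\fp(Q)$ and $g$ is a monomorphism in $\fp(Q)$, it splits, giving $E \simeq g(I) \oplus C$. Now $\soc I \subseteq g(I)$, so $C \cap \soc I \subseteq C \cap g(I) = 0$, and essentiality of $\soc I$ in $E$ forces $C = 0$. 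Thus $g$ is an isomorphism and $I \simeq E$, as claimed.

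I expect the only genuine subtlety to be the compatibility between the two ambient categories: one must confirm that $E$, naturally built as an injective envelope inside $\Rep(Q)$, actually lands in $\fp(Q)$, so that the final splitting can be performed using injectivity of $I$ \emph{within} $\fp(Q)$. Everything else is the standard yoga of essential extensions and injective envelopes.
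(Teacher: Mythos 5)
Your proof is correct and follows essentially the same route as the paper: both $I$ and $E$ are recognized as injective envelopes of $\soc I$ inside $\fp(Q)$, with the uniqueness of injective envelopes spelled out at the end and membership of $E$ in $\fp(Q)$ obtained via Corollary~\ref{cor:Ia-fp} instead of Corollary~\ref{cor:fd-fp}. The one point to tighten is that essentiality of $\soc I$ in $E$ should be deduced from Lemma~\ref{lem:soc-rad} (applicable because $\supp E \subseteq \supp I$ contains no left infinite paths), not from the computation of $\soc I_a$ alone, which is exactly how the paper argues.
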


\begin{proof}
  It follows from Lemma~\ref{lem:inj-supp-f} that $\supp I$ contains only finitely many vertices.
  Let
  \(
    J = \bigoplus_{a \in Q_0} I_a^{\oplus \dim (\soc I) (a)}
  \).
  This is a finite direct sum, since the vertices in $\supp I$ are finite and $I$ is pointwise finite dimensional.

  For any vertex $a$ in $\supp I$, its predecessors are also contained in $\supp I$;
  see Corollary~\ref{cor:inj-supp-a-}.
  It follows that $\supp J$ is a subquiver of $\supp I$.
  Then $\soc I$ and $J$ are finite dimensional.
  Corollary~\ref{cor:fd-fp} implies that they are finitely presented.
  We observe by Lemma~\ref{lem:soc-rad} that the inclusion $\soc I \subseteq I$ and the injection $\soc I \to J$ are injective envelopes in $\fp(Q)$.
  Then the result follows.
\end{proof}

For an injective object in $\fp(Q)$ whose support contains some left infinite paths, we mention the following facts.
They will be used technically in the proof of Theorem~\ref{thm:classify}.

\begin{lemma}\label{lem:inj-supp-p-finite}
  Let $I$ be an injective object in $\fp(Q)$, whose support contains some left infinite paths.
  Denote by $\Delta$ the set of left infinite paths $p$ contained in $\supp I$ with $s(p)^- \cap \supp I = \emptyset$.
  Then $\Delta$ is finite, and every left infinite path $p$ in $\supp I$ admits some path $u$ with $p u \in \Delta$.
\end{lemma}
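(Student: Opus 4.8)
The plan is to prove the two assertions separately, and both rest on structural facts about the full subquiver $\supp I$. First I would record that $\supp I$ is top finite by Lemma~\ref{lem:fp-supp}, closed under predecessors by Corollary~\ref{cor:inj-supp-a-}, and interval finite as a subquiver of $Q$; moreover the paths witnessing top-finiteness may be taken inside $\supp I$ (their intermediate vertices are predecessors of points of $\supp I$, hence in $\supp I$), so top-finiteness holds intrinsically for $\supp I$. Then Lemma~\ref{lem:inf-path}, applied to the quiver $\supp I$ itself, shows that $\supp I$ contains no right infinite paths. For the extension claim I start from a left infinite path $p$ in $\supp I$ and walk backward from $s(p)$: as long as the current starting vertex has a predecessor in $\supp I$, I prepend an arrow, which lies in $\supp I$ since it is a full subquiver. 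If this process never halted it would produce a right infinite path inside $\supp I$, contradicting the previous sentence; hence after finitely many steps it reaches a vertex with no predecessor in $\supp I$, giving a finite path $u$ with $pu \in \Delta$.

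For the finiteness of $\Delta$ I would first bound the admissible starting vertices. If $v \in \supp I$ satisfies $v^- \cap \supp I = \emptyset$, then $v$ is a successor, inside $\supp I$, of one of the finitely many top vertices $b_1, \dots, b_n$; were the connecting path nontrivial, its penultimate vertex would be a predecessor of $v$ in $\supp I$, a contradiction. Thus every such source equals some $b_i$, so there are at most $n$ sources, and it remains to bound, for each fixed source $v$, the number of left infinite paths of $\supp I$ starting at $v$.

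The crux, and the step I expect to be the main obstacle, is converting injectivity into a uniform bound on path counts. For a fixed length $l$, the distinct forward paths of length $l$ from $v$ lying in $\supp I$ share the source $v$ and pairwise do not factor through one another, since equal length forbids $p_i = u p_j$ with $u$ nontrivial; hence Lemma~\ref{lem:alpha-epi} makes $I(v)$ surject onto the direct sum of the $I(t(p_i))$. As each target lies in $\supp I$ and is therefore nonzero, and as $I$ is pointwise finite dimensional by Lemma~\ref{lem:fp-pf}, the number of such length-$l$ paths is at most $\dim I(v)$, independently of $l$. Regarding the finite forward paths from $v$ in $\supp I$ as a tree whose level $l$ has width at most $\dim I(v)$, a K\"onig-type separation argument bounds the number of infinite branches, equivalently the left infinite paths from $v$, by $\dim I(v)$: more than $\dim I(v)$ of them would, beyond the finitely many levels at which they pairwise separate, occupy more than $\dim I(v)$ distinct vertices at a single level. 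Summing the bound $\dim I(v)$ over the finitely many sources $v$ then shows that $\Delta$ is finite.
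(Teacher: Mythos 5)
Your proof is correct and follows essentially the same route as the paper's: both identify the possible sources of paths in $\Delta$ with the finitely many top vertices $b_i$ (using Corollary~\ref{cor:inj-supp-a-}) and then combine Lemma~\ref{lem:alpha-epi} with the finite dimensionality of $I(b_i)$ to bound the number of pairwise non-factoring finite paths out of each $b_i$, hence the number of left infinite paths. The only differences are presentational: you bound each source separately by $\dim I(v)$ and invoke a K\"onig-type separation argument, where the paper pigeonholes $nZ+1$ truncations at once, and you justify more explicitly (via Lemma~\ref{lem:inf-path} applied to $\supp I$) why the backward walk producing $u$ terminates.
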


\begin{proof}
  Lemma~\ref{lem:fp-supp} implies that $\supp I$ is top finite. Assume vertices $b_1, b_2, \dots, b_n$ satisfy that any vertex in $\supp I$ is a successor of some $b_i$. We can assume each $b_i^- \cap \supp I = \emptyset$.

  Let $p$ be a left infinite path in $\supp I$. We observe that $s(p)$ is a successor of some $b_i$. Choose some $u \in Q(b_i, s(p))$. Then $p u \in \Delta$.
  In particular, if $p \in \Delta$ then $b_i = s(p)$ since $s(p)^- \cap \supp I = \emptyset$.

  Assume $\Delta$ is infinite. Then for
  \(
    Z = \max_{1 \leq i \leq n} \dim I(b_i)
  \),
  One can find $n Z + 1$ paths $u_j$ from some $b_i$ such that each $u_j$ is not of the form $v u_{j'}$ for any $j' \neq j$.
  We observe that at least one $1 \leq i \leq n$ such that the number of $u_j$ from $b_i$ is greater than $Z$.
  Then Lemma~\ref{lem:alpha-epi} implies that $\dim I(b_i) > Z$, which is a contradiction.
  Then the result follows.
\end{proof}

\begin{lemma}\label{lem:inj-supp-p}
  Let $I$ be an injective object in $\fp(Q)$, whose support contains some left infinite path $\cdots \alpha_i \cdots \alpha_2 \alpha_1$. Set $a_i = s(\alpha_{i+1})$ for any $i \geq 0$. Then there exists some nonnegative integer $Z$ such that $a_i^+ = \set{a_{i+1}}$, $a_{i+1}^- = \set{a_i}$ and
  $I(\alpha_{i+1})$ is a bijection for any $i \geq Z$.
\end{lemma}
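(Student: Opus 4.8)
The plan is to fix $Z$ only after establishing the three conclusions separately: that $I(\alpha_{i+1})$ is eventually bijective, that $a_i^+$ is eventually $\set{a_{i+1}}$, and that $a_{i+1}^-$ is eventually $\set{a_i}$. Since every $a_i$ lies in $\supp I$, each $I(a_i)$ is nonzero and finite dimensional by Lemma~\ref{lem:fp-pf}. Applying Lemma~\ref{lem:alpha-epi} to the single arrow $\alpha_{i+1} \colon a_i \to a_{i+1}$ shows that $I(\alpha_{i+1})$ is surjective, so $\dim I(a_i) \geq \dim I(a_{i+1})$. Hence $(\dim I(a_i))_{i \geq 0}$ is a nonincreasing sequence of positive integers and stabilizes beyond some $Z_1$; for $i \geq Z_1$ the surjection $I(\alpha_{i+1})$ is a map between spaces of equal finite dimension, hence a bijection. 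This already settles the statement about $I(\alpha_{i+1})$.

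For $a_i^+ = \set{a_{i+1}}$ I would treat out-neighbours inside and outside $\supp I$ separately. If $c \neq a_{i+1}$ is an out-neighbour of $a_i$ lying in $\supp I$, then $I(c) \neq 0$, and Lemma~\ref{lem:alpha-epi} applied to $\alpha_{i+1}$ together with an arrow $a_i \to c$ gives a surjection $I(a_i) \to I(a_{i+1}) \oplus I(c)$; for $i \geq Z_1$ this forces $\dim I(a_i) \geq \dim I(a_{i+1}) + \dim I(c) = \dim I(a_i) + \dim I(c)$, which is absurd. So beyond $Z_1$ the only out-neighbour of $a_i$ in $\supp I$ is $a_{i+1}$. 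For out-neighbours outside $\supp I$, recall that $\Delta_0 := \bigcup_{a \in \supp I} a^+ \setminus \supp I$ is finite by Lemma~\ref{lem:fp-supp}. If infinitely many $a_i$ had an out-neighbour in $\Delta_0$, then some fixed $d \in \Delta_0$ would receive arrows from $a_{i_1}, a_{i_2}, \dots$ with $i_1 < i_2 < \cdots$; composing the tail $a_0 \to \cdots \to a_{i_k}$ with the arrow $a_{i_k} \to d$ produces paths in $Q(a_0, d)$ of pairwise distinct lengths $i_k + 1$, contradicting interval finiteness. Hence only finitely many $a_i$ meet $\Delta_0$, and for all $i$ at least both $Z_1$ and some $Z_2$ we obtain $a_i^+ = \set{a_{i+1}}$.

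The main obstacle is $a_{i+1}^- = \set{a_i}$, because Lemma~\ref{lem:alpha-epi} controls out-neighbours but not in-neighbours; here I would exploit Lemma~\ref{lem:inj-supp-p-finite}. Suppose infinitely many indices $i$ admit an in-neighbour $c_i \neq a_i$ of $a_{i+1}$, via an arrow $\beta_i \colon c_i \to a_{i+1}$. Since $\supp I$ is closed under predecessors by Corollary~\ref{cor:inj-supp-a-}, each $c_i$ and each $\beta_i$ lies in $\supp I$. Attaching $\beta_i$ in front of the forward tail $a_{i+1} \to a_{i+2} \to \cdots$ yields a left infinite path $q_i$ in $\supp I$ whose forward reading runs along $\beta_i$ and then the arrows $\alpha_{i+2}, \alpha_{i+3}, \dots$. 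By Lemma~\ref{lem:inj-supp-p-finite} each $q_i$ extends on the right to some $\widetilde{q}_i \in \Delta$, and $\Delta$ is finite. The key point is that the $\widetilde{q}_i$ are pairwise distinct: if $\widetilde{q}_i = \widetilde{q}_{i'}$ for $i < i'$, then this single path would reach the vertex $a_{i'+1}$ both through $\beta_{i'}$ and, following the $\alpha$-tail after $\beta_i$, through $\alpha_{i'+1}$; as interval finiteness forbids oriented cycles, the path $Q$ is acyclic and visits $a_{i'+1}$ only once, forcing $\beta_{i'} = \alpha_{i'+1}$ and hence $c_{i'} = a_{i'}$, a contradiction. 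We would thus obtain infinitely many distinct elements of the finite set $\Delta$, which is impossible. Therefore only finitely many $a_{i+1}$ have an in-neighbour besides $a_i$, and beyond some $Z_3$ we get $a_{i+1}^- = \set{a_i}$. Taking $Z = \max\set{Z_1, Z_2, Z_3}$ then yields all three conclusions for $i \geq Z$.
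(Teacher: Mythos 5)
Your proposal is correct. The first two conclusions are handled essentially as in the paper: surjectivity of $I(\alpha_{i+1})$ from Lemma~\ref{lem:alpha-epi} forces the dimensions $\dim I(a_i)$ to stabilize, giving bijectivity, and the same dimension count rules out extra out\nobreakdash-neighbours of $a_i$ inside $\supp I$; your pigeonhole argument (a fixed $d \in \bigcup_{a \in \supp I} a^+ \setminus \supp I$ receiving arrows from infinitely many $a_{i_k}$ would produce infinitely many paths in $Q(a_0,d)$) in fact supplies a detail the paper leaves implicit when it asserts the existence of $Z_1$ with $a_i^+ \subseteq \supp I$. Where you genuinely diverge is the claim $a_{i+1}^- = \set{a_i}$. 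The paper argues by counting: it fixes top vertices $b_1,\dots,b_n$, notes $\abs{Q(b_j,a_i)} \leq \abs{Q(b_j,a_{i+1})}$ with strict inequality whenever $a_{i+1}$ has an extra in\nobreakdash-neighbour, and then uses Lemma~\ref{lem:alpha-epi} to bound $\abs{Q(b_j,a_i)}$ by $\dim I(b_j)$, so unboundedness is impossible. You instead invoke Lemma~\ref{lem:inj-supp-p-finite}: each extra in\nobreakdash-neighbour $c_i \to a_{i+1}$ spawns a left infinite path $q_i$ whose completion $\widetilde{q}_i$ lies in the finite set $\Delta$, and acyclicity of an interval finite quiver (each vertex occurs at most once on a path, so the arrow entering $a_{i'+1}$ is determined) shows the $\widetilde{q}_i$ are pairwise distinct --- a contradiction. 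Both arguments are sound; the paper's is more self-contained (it needs only top finiteness and Lemma~\ref{lem:alpha-epi}), while yours leans on the finiteness of $\Delta$, trading a numerical estimate for a combinatorial injection into a finite set. One cosmetic slip: ``the path $Q$ is acyclic'' should read ``the quiver $Q$ is acyclic.''
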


\begin{proof}
  We observe by Lemma~\ref{lem:fp-supp} that $\supp I$ is top finite and there exists some nonnegative integer $Z_1$ such that $a_i^+$ is contained in $\supp I$ for any $i \geq Z_1$.

  It follows from Lemma~\ref{lem:alpha-epi} that $\dim I(a_i) \geq \dim I(a_{i+1})$ for any $i \geq 0$. Then there exists some nonnegative integer $Z_2 \geq Z_1$ such that $\dim I(a_i) = \dim I(a_{Z_2})$ for any $i \geq Z_2$.
  Since $I(\alpha_{i+1})$ is a surjection by Lemma~\ref{lem:alpha-epi}, then it is a bijection.

  We claim that $a_i^+ = \set{a_{i+1}}$ and $Q(a_i, a_{i+1}) = \set{\alpha_{i+1}}$ for any $i \geq Z_2$. Indeed, otherwise there exist some arrow $\beta \colon a_i \to b$ in $\supp I$ with $i \geq Z_2$ and $\beta \neq \alpha_{i+1}$.
  Then Lemma~\ref{lem:alpha-epi} implies that
  \(
    \dim I(a_i) \geq \dim I(a_{i+1}) + \dim I(b) > \dim I(a_{i+1})
  \),
  which is a contradiction.

  Assume vertices $b_1, b_2, \dots, b_n$ satisfy that every vertex in $\supp I$ is a successor of some $b_j$.
  We observe that
  \(
    \abs{Q(b_j, a_{i+1})} \geq \abs{Q(b_j, a_i)}
  \)
  for any $1 \leq j \leq n$ and $i \geq 0$.
  If moreover $\abs{a_{i+1}^-} > 1$ for some $i \geq 0$, then there exists some $j$ such that
  \(
    \abs{Q(b_j, a_{i+1})} > \abs{Q(b_j, a_i)}
  \).

  We claim the existence of nonnegative integer $Z_3$ such that $a_{i+1}^- = \set{a_i}$ for any $i \geq Z_3$.
  Indeed, otherwise there exists some $1 \leq j \leq n$ such that $\set{\abs{Q(b_j, a_i)} | i \geq 0}$ is unbounded. Then Lemma~\ref{lem:alpha-epi} implies that $I(b_j)$ is not finite dimensional, which is a contradiction.

  Let $Z = \max \set{Z_2, Z_3}$. Then the result follows.
\end{proof}

Following \cite[Subsection~2.1]{Chen2015Irreducible}, we define an equivalence relation on left infinite paths.
\label{def:equiv}
Two left infinite paths $\cdots \alpha_i \cdots \alpha_2 \alpha_1$ and $\cdots \beta_i \cdots \beta_2 \beta_1$ are equivalent if there exist some positive integers $m$ and $n$ such that
\[
  \cdots \alpha_i \cdots \alpha_{m+1} \alpha_m
  = \cdots \beta_i \cdots \beta_{n+1} \beta_n.
\]

Let $p$ be a left infinite path.
We denote by $[p]$ the equivalence class containing $p$. We mention that $[p]$ is a set. For any vertex $a$, we denote by $[p]_a$ the subset of $[p]$ formed by left infinite paths $u$ with $s(u) = a$.

Considering \cite[Section~5]{Jiao2019Projective} and \cite[Subsection~3.1]{Chen2015Irreducible}, we introduce a representation $Y_{[p]}$ as follows.
For every vertex $a$, we let
\[
  Y_{[p]} (a) = \Hom_k \biggl( \bigoplus_{u \in [p]_a} k u, k \biggr).
\]
For every arrow $\alpha \colon a \to b$, we let
\[
  Y_{[p]} (\alpha) \colon Y_{[p]} (a) \To Y_{[p]} (b),
  \enskip f \mapsto ( u \mapsto f( u \alpha ) ).
\]
We mention that these $Y_{[p]}$ are indecomposable and pairwise non-isomorphic;
see the dual of \cite[Proposition~5.4]{Jiao2019Projective}.

Recall that a quiver is called \emph{uniformly interval finite}, if there exists some positive integer $Z$ such that for any vertices $a$ and $b$, the number of paths $p$ from $a$ to $b$ is less than or equal to $Z$;
see \cite[Definition~2.3]{Jiao2019Projective}.

We characterize when $Y_{[p]}$ is finitely presented.

\begin{lemma}\label{lem:Yp-fp}
  Let $p$ be a left infinite path. Then $Y_{[p]}$ is finitely presented if and only if
  $\supp Y_{[p]}$ is top finite uniformly interval finite and
  \(
    \bigcup_{a \in \supp Y_{[p]}} a^+ \setminus \supp Y_{[p]}
  \)
  is finite.
\end{lemma}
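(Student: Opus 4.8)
The plan is to treat the two implications separately, the forward (necessity) direction being short and the converse (sufficiency) direction carrying the real content.

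For necessity, suppose $Y_{[p]}$ is finitely presented. Then $\supp Y_{[p]}$ is top finite and $\bigcup_{a \in \supp Y_{[p]}} a^+ \setminus \supp Y_{[p]}$ is finite by Lemma~\ref{lem:fp-supp}, and $Y_{[p]}$ is pointwise finite dimensional by Lemma~\ref{lem:fp-pf}, so it remains to produce a uniform bound on path counts in $Q' := \supp Y_{[p]}$. Writing $\dim Y_{[p]}(a) = \abs{[p]_a}$ and choosing top vertices $b_1, \dots, b_n$, I would set $D = \max_i \dim Y_{[p]}(b_i) < \infty$. The key is two elementary injections obtained by concatenating paths: first, for $a, b \in Q'$ and a fixed path $b_i \to a$, right multiplication embeds $Q'(a,b)$ into $Q'(b_i, b)$; second, since $b$ is a predecessor of some vertex $a_j$ on $p$, fixing a path $b \to a_j$ and appending the tail of $p$ embeds $Q'(b_i, b)$ into $[p]_{b_i}$. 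Composing gives $\abs{Q'(a,b)} \le D$ for all $a, b \in Q'$, so $Q'$ is uniformly interval finite. Pleasingly, this needs neither the injectivity of $Y_{[p]}$ nor any fine tail structure.

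For sufficiency, assume the three conditions. First I would reduce to a concrete projective presentation. Since $\supp Y_{[p]}$ is top finite, Lemma~\ref{lem:inf-path} shows it contains no right infinite paths, so every vertex traces back to one of finitely many sources $s_1, \dots, s_m$ (each necessarily a top vertex), and $\supp Y_{[p]}$ is closed under predecessors directly from the definition. A counting argument using uniform interval finiteness — were some $[p]_{s_j}$ infinite, there would be infinitely many entry points of $s_j$ into $p$, forcing $\abs{Q'(s_j, a_{j_m})}$ to be unbounded — shows each $Y_{[p]}(s_j)$ is finite dimensional; prepending paths then bounds all $\dim Y_{[p]}(a)$ uniformly. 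Hence $Y_{[p]}$ is generated by $\bigoplus_j Y_{[p]}(s_j)$, giving a surjection $f \colon P := \bigoplus_j P_{s_j}^{\oplus d_j} \to Y_{[p]}$ with $d_j = \dim Y_{[p]}(s_j)$. Since $\Rep(Q)$ is hereditary, $K := \Ker f$ is projective, so it suffices to show $K/\rad K$ is finite dimensional, i.e. that $K$ has finitely many indecomposable summands. Using the first-arrow decomposition $\abs{[p]_a} = \sum_{\alpha \colon a \to c} \abs{[p]_c}$, the sequence $\dim Y_{[p]}(a_i)$ is non-increasing and bounded, hence eventually constant; equality in the recursion then forces, for $i$ large, a unique arrow $a_i \to a_{i+1}$ into the support and $\dim Y_{[p]}(a_i) = 1$. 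Two further uses of uniform interval finiteness rule out, for $i$ large, any arrow leaving the support at $a_i$ and any predecessor of $a_{i+1}$ besides $a_i$. This \emph{bare tail} makes $P(\alpha_{i+1})$ and $Y_{[p]}(\alpha_{i+1})$ isomorphisms for $i \ge N$, whence $K(\alpha_{i+1})$ is an isomorphism and $(K/\rad K)(a_i) = 0$ for $i > N$. Off the support and away from the finite boundary $\Delta := \bigcup_a a^+ \setminus \supp Y_{[p]}$ one has $K = P$ locally, so $(K/\rad K)(c) = (P/\rad P)(c) = 0$ since such $c$ is not a source; the remaining vertices lie in the finite set of predecessors of $a_N$ together with $\Delta$. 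Thus $K/\rad K$ is finite dimensional and $Y_{[p]}$ is finitely presented.

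The main obstacle is exactly this bare-tail analysis: extracting, purely from top finiteness and uniform interval finiteness — and crucially without the injectivity of $Y_{[p]}$, which is not yet available at this stage — the eventual rigidity of $p$ (one arrow in, one arrow out, one-dimensional fibres), and propagating it to show the kernel acquires no new generators along the infinite tail. Everything else reduces to bookkeeping of finite-dimensional pieces at the finitely many core vertices and at $\Delta$.
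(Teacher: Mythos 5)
Your necessity argument is essentially the paper's: both extract top finiteness and finiteness of the boundary from Lemma~\ref{lem:fp-supp}, and both bound $\abs{Q(a,b)}$ by composing paths into $[p]_{b_i}$ for a top vertex $b_i$ and using $\abs{[p]_{b_i}} = \dim Y_{[p]}(b_i) < \infty$. The sufficiency direction is where you genuinely diverge. The paper never analyzes the kernel of a presentation: after establishing the same bare tail ($a_{i+1}^- = \set{a_i}$, $a_i^+ = \set{a_{i+1}}$, $Y_{[p]}(\alpha_{i+1})$ bijective for $i \geq Z$), it takes the subrepresentation $N$ of $Y_{[p]}$ generated by $Y_{[p]}(a_Z)$, observes that $N \simeq P_{a_Z}^{\oplus \dim Y_{[p]}(a_Z)}$ and that $Y_{[p]}/N$ is finite dimensional with each $b^+$ finite (hence finitely presented by Corollary~\ref{cor:fd-fp}), and concludes by closure of $\fp(Q)$ under extensions. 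You instead present $Y_{[p]}$ from its finitely many sources and control the kernel $K$. The intermediate steps of your route do check out: the sources exist and are finitely many by Lemma~\ref{lem:inf-path} together with closure of the support under predecessors; $\abs{[p]_{s_j}}$ is finite by your merge-point count (which genuinely needs the \emph{uniform} bound); each $Y_{[p]}(w)$ with $w \colon s_j \to a$ is surjective, being dual to the injection $u \mapsto uw$; and the recursion $\abs{[p]_a} = \sum_{\alpha \colon a \to c} \abs{[p]_c}$ yields the bare tail --- though ruling out arrows leaving the support along the tail uses finiteness of the boundary plus interval finiteness of $Q$, not uniform interval finiteness as you assert.

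The one step you should not pass over is the reduction ``$K$ is projective, so it suffices to show $K/\rad K$ is finite dimensional, i.e.\ that $K$ has finitely many indecomposable summands.'' This tacitly assumes that every projective subrepresentation of a finitely generated free representation is a direct sum of copies of the $P_a$; over an infinite quiver that is a genuine structure theorem (of Cartan--Eilenberg/Kaplansky type) which this paper neither states nor proves, and without it ``$K/\rad K$ finite dimensional'' does not formally give ``$K$ finitely generated.'' The gap is fillable without the structure theorem: since $Q$ is interval finite and has no oriented cycles, each $P(a)$ is spanned by finitely many paths, so $\rad^n P(a) = 0$ for $n$ large depending on $a$; if $L \subseteq K$ is generated by lifts of a basis of $K/\rad K$, then iterating $K = L + \rad K$ gives $K(a) = L(a) + (\rad^n K)(a) \subseteq L(a) + (\rad^n P)(a) = L(a)$, so $K = L$ is finitely generated. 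With that graded Nakayama argument inserted your proof is complete; the trade-off is that the paper's quotient-by-the-tail argument avoids all kernel bookkeeping, while yours produces an explicit presentation at the cost of the radical computation at the boundary and at the predecessors of $a_N$.
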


\begin{proof}
  For the necessary, we assume $Y_{[p]}$ is finitely presented.
  It follows from Lemma~\ref{lem:fp-supp} that $\supp Y_{[p]}$ is top finite and
  \(
    \bigcup_{a \in \supp Y_{[p]}} a^+ \setminus \supp Y_{[p]}
  \)
  is finite.

  Assume vertices $b_1, b_2, \dots, b_n$ satisfy that any vertex in $\supp Y_{[p]}$ is a successor of some $b_i$.
  Let $a$ and $a'$ be a pair of vertices in $\supp Y_{[p]}$. Then there exists some $Q(b_i, a) \neq \emptyset$.
  Set
  \(
    Z = \max_{1 \leq i \leq n} \dim Y_{[p]} (b_i)
  \).
  Since $Q$ contains no oriented cycles, we have that
  \[
    \abs{Q(a, a')} \leq \abs{Q(b_i, a')} \leq \abs{[p]_{b_i}} = \dim Y_{[p]} (b_i) \leq Z.
  \]
  It follows that $\supp Y_{[p]}$ is uniformly interval finite.

  For the sufficiency, we assume
  \(
    p = \cdots \alpha_i \cdots \alpha_2 \alpha_1
  \).
  Set $a_i = s(\alpha_{i+1})$ for any $i \geq 0$.
  Assume vertices $b_1, b_2, \dots, b_n$ satisfy that any vertex in $\supp Y_{[p]}$ is a successor of some $b_i$.
  Since $\supp Y_{[p]}$ is uniformly interval finite, then
  \[
    \set{\abs{Q(b_j, a_i)} | i \geq 0, 1 \leq j \leq n}
  \]
  is bounded. We observe that $\abs{Q(b_j, a_i)} \leq \abs{Q(b_j, a_{i+1})}$.
  Then there exists some nonnegative integer $Z_1$ such that $\abs{Q(b_j, a_i)} = \abs{Q(b_j, a_{Z_1})}$ for any $i \geq Z_1$ and $1 \leq j \leq n$.
  In particular, $a_{i+1}^- = \set{a_i}$ and $Q(a_i, a_{i+1}) = \set{\alpha_{i+1}}$ for any $i \geq Z_1$.

  Since
  \(
    \bigcup_{a \in \supp Y_{[p]}} a^+ \setminus \supp Y_{[p]}
  \)
  is finite, there exists some nonnegative integer $Z_2$ such that $a_i^+$ is contained in $\supp Y_{[p]}$ for any $i \geq Z_2$.
  Let $Z = \max \set{Z_1, Z_2}$. Then $a_{i+1}^- = \set{a_i}$, $a_i^+ = \set{a_{i+1}}$ and $Y_{[p]} (\alpha_{i+1})$ is a bijection.

  Consider the subrepresentation $N$ of $Y_{[p]}$ generated by $Y_{[p]}(a_Z)$.
  We observe that
  \(
    N \simeq P_{a_Z}^{\oplus \dim Y_{[p]}(a_Z)}
  \)
  and hence is finitely presented.
  Moreover, $\supp ( Y_{[p]} / N )$ contains only finitely many vertices $b$ and $b^+$ is finite.
  Then $Y_{[p]} / N$ is finitely presented by Corollary~\ref{cor:fd-fp}.
  Consider the exact sequence
  \[
    0 \To N \To Y_{[p]} \To Y_{[p]} / N \To 0.
  \]
  It follows that $Y_{[p]}$ is finitely presented.
\end{proof}

We show the injectivity of finitely presented $Y_{[p]}$ in $\fp(Q)$;
compare the dual of \cite[Proposition~6.2]{Jiao2019Projective}.

\begin{proposition}\label{prop:Yp-inj}
  Let $p$ be a left infinite path such that $Y_{[p]}$ is finitely presented. Then $Y_{[p]}$ is an indecomposable injective object in $\fp(Q)$.
\end{proposition}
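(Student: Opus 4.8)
We are given that $Y_{[p]}$ is finitely presented; we must show it is indecomposable and injective in $\fp(Q)$. Indecomposability is already recorded in the excerpt (the $Y_{[p]}$ are indecomposable by the dual of \cite[Proposition~5.4]{Jiao2019Projective}), so the substance is injectivity. Let me think about how to prove injectivity.

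Let me recall the structure. We have $Y_{[p]}(a) = \Hom_k(\bigoplus_{u \in [p]_a} ku, k)$, which is formally very close to the definition of $I_a$, where $I_a(b) = \Hom_k(\bigoplus_{q \in Q(b,a)} kq, k)$. The key difference is that $I_a$ uses finite paths ending at a fixed vertex $a$, while $Y_{[p]}$ uses the equivalence class of left infinite paths. This suggests that $Y_{[p]}$ should be expressible as a limit or colimit of the $I_{a_i}$ along the infinite path.

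Since $Y_{[p]}$ is finitely presented, I can invoke Lemma~\ref{lem:Yp-fp} to get structural control: there is a nonnegative integer $Z$ such that along the tail of the path $p = \cdots \alpha_i \cdots \alpha_1$ (with $a_i = s(\alpha_{i+1})$), we have $a_{i+1}^- = \{a_i\}$, $a_i^+ = \{a_{i+1}\}$, and $Y_{[p]}(\alpha_{i+1})$ a bijection for $i \ge Z$. This means that the tail of $\supp Y_{[p]}$ is a "linear" infinite chain on which $Y_{[p]}$ is locally constant.

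**The plan.** To verify injectivity in $\fp(Q)$, by the dual of Baer's criterion or directly by exhibiting $Y_{[p]}$ as an injective, the cleanest route is to realize $Y_{[p]}$ as a filtered colimit — more precisely, I would show
\[
  Y_{[p]} \simeq \varinjlim_i I_{a_i},
\]
where the transition maps $I_{a_i} \to I_{a_{i+1}}$ are induced by the arrows $\alpha_{i+1}$ (using $I_{a_i}(\alpha_{i+1})$-type structure, or rather the canonical maps coming from post-composition with $\alpha_{i+1}$ sending $Q(b,a_i) \to Q(b,a_{i+1})$). Concretely, a path $u \in [p]_b$ is, for large enough truncation, a finite path into some $a_i$, and the equivalence class $[p]_b$ is the union over $i$ of $Q(b, a_i)$ under these inclusions; dualizing turns the colimit of finite-path duals into the infinite-path dual, giving the colimit description of $Y_{[p]}$. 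Each $I_{a_i}$ is injective in $\Rep(Q)$ by Lemma~\ref{lem:proj-inj}(2).

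**The main step** is then to check that injectivity passes to the colimit \emph{inside $\fp(Q)$}. This does not follow from a general "filtered colimits of injectives are injective" principle — that fails in general abelian categories — so I would instead verify the defining extension property directly. Given a monomorphism $M \hookrightarrow N$ in $\fp(Q)$ and a morphism $f\colon M \to Y_{[p]}$, I want to extend $f$ to $N$. Here the finiteness is crucial: since $M, N$ are finitely presented, $\supp N$ is top finite by Lemma~\ref{lem:fp-supp}, and I can use Lemma~\ref{lem:inj-supp-p} (and the structural tail described above) to argue that $f$ factors through some finite truncation $I_{a_i}$ up to the relevant part of the support, reducing the extension problem to the genuine injectivity of a single $I_{a_i}$ in $\Rep(Q)$.

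**The hard part** will be making the reduction-to-$I_{a_i}$ rigorous: I must argue that because $N$ is finitely generated, the image $f(M)$ together with any extension is ``bounded'' along the infinite tail — i.e., supported up to some finite level $a_i$ — so that the infinite colimit does not actually cause an obstruction. Equivalently, I expect to use $\zeta_M$ from Lemma~\ref{lem:proj-inj}(2) to translate $\Hom(-, I_{a_i})$ into a condition on the single vector space at $a_i$, and then show the compatible system of such functionals along the tail stabilizes because of the bijections $Y_{[p]}(\alpha_{i+1})$ for $i \ge Z$. An alternative, possibly cleaner, route avoiding colimits entirely is to split $Y_{[p]}$ using the exact sequence $0 \to N' \to Y_{[p]} \to Y_{[p]}/N' \to 0$ from the proof of Lemma~\ref{lem:Yp-fp} (with $N' \simeq P_{a_Z}^{\oplus d}$): one would verify that $\Ext^1_{\fp(Q)}(-, Y_{[p]})$ vanishes by a dévissage along this sequence, using that $\fp(Q)$ is hereditary (Proposition~\ref{prop:fp-abel}) so that $\Ext^2$ vanishes and the long exact sequence controls everything. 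I would pursue whichever reduction most directly lets me cite the injectivity of the $I_{a_i}$ in $\Rep(Q)$ and the finiteness constraints of $\fp(Q)$ together.
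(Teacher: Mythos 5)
Your overall strategy---relate $Y_{[p]}$ to the injectives $I_{a_i}$ along the tail of $p$ and pass injectivity to the limit---is the right one, but the key step is set up in the wrong direction, and that direction is exactly what makes the argument work or fail. The canonical maps you describe, $Q(b,a_i)\to Q(b,a_{i+1})$, $u\mapsto \alpha_{i+1}u$, dualize \emph{contravariantly} to morphisms $\psi_{i+1}\colon I_{a_{i+1}}\to I_{a_i}$; thus $(I_{a_i})_{i\ge 0}$ is an \emph{inverse} system and $Y_{[p]}\simeq\varprojlim I_{a_i}$ (the colimit of the path spaces becomes a limit of their duals), not $\varinjlim I_{a_i}$. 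This is not a cosmetic slip: with the inverse limit, $\Hom(X,\varprojlim I_{a_i})\simeq\varprojlim\Hom(X,I_{a_i})$ holds for free, and the only issue is the exactness of $\varprojlim$ applied to the exact sequences of inverse systems obtained from injectivity of each $I_{a_i}$ in $\Rep(Q)$; that exactness follows from the Mittag--Leffler condition, which holds trivially because $\Hom(N,I_{a_i})\simeq\Hom_k(N(a_i),k)$ is finite dimensional for $N\in\fp(Q)$. This is the paper's proof, and it needs none of the tail structure from Lemma~\ref{lem:Yp-fp} that you invoke. Your colimit formulation instead forces you into the genuinely hard problem of commuting $\Hom(M,-)$ past a filtered colimit of injectives, and you explicitly leave that reduction (``$f$ factors through some finite truncation $I_{a_i}$'') as ``the hard part'' without an argument; as written, the injectivity claim is not established.

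The alternative dévissage you sketch also does not go through: in the sequence $0\to N'\to Y_{[p]}\to Y_{[p]}/N'\to 0$ the subobject $N'\simeq P_{a_Z}^{\oplus d}$ is \emph{projective}, not injective, and $Y_{[p]}/N'$ is merely finite dimensional, so the long exact sequence gives $\Ext^1(X,N')\to\Ext^1(X,Y_{[p]})\to\Ext^1(X,Y_{[p]}/N')$ with no reason for either outer term (or the relevant image) to vanish; heredity only kills $\Ext^2$ and does not help here. So neither of your two proposed routes, as described, yields the injectivity of $Y_{[p]}$; the missing idea is precisely the inverse-limit presentation together with the Mittag--Leffler argument.
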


\begin{proof}
  Assume
  \(
    p = \cdots \alpha_i \cdots \alpha_2 \alpha_1
  \).
  For each $i \geq 0$, we set $a_i = s(\alpha_{i+1})$.
  Consider the morphism
  \(
    \psi_{i+1} \colon I_{a_{i+1}} \to I_{a_i}
  \)
  given by
  \(
    (\psi_{i+1})_b (f) (u) = f(\alpha_{i+1} u)
  \)
  for any $f \in I_{a_{i+1}} (b)$ and $u \in Q(b, a_i)$.
  We observe that $(I_{a_i})_{i \geq 0}$ forms an inverse system, and $Y_{[p]}$ is the inverse limit in $\Rep(Q)$;
  see also \cite[Lemma~5.7]{Jiao2019Projective}.

  Given any exact sequence
  \[
    0 \To L \To M \To N \To 0
  \]
  in $\fp(Q)$, it is also an exact sequence in $\Rep(Q)$.
  Applying $\Hom(-, I_{a_i})$, we obtain an exact sequence of inverse systems of $k$-linear spaces
  \[
    0 \To ( \Hom(N, I_{a_i}) ) \To ( \Hom(M, I_{a_i}) ) \To ( \Hom(L, I_{a_i}) ) \To 0.
  \]
  Lemma~\ref{lem:proj-inj} implies that
  \(
    \Hom(N, I_{a_i}) \simeq \Hom_k( N(a_i), k )
  \).
  We observe by Lemma~\ref{lem:fp-pf} that it is finite dimensional.
  Then $( \Hom(N, I_{a_i}) )$ satisfies the Mittag-Leffler condition naturally.
  It follows from \cite[Proposition~3.5.7]{Weibel1994introduction} the exact sequence
  \[
    0 \To \varprojlim \Hom(N, I_{a_i}) \To \varprojlim \Hom(M, I_{a_i})
    \To \varprojlim \Hom(L, I_{a_i}) \To 0.
  \]
  For any $X \in \fp(Q)$, there exist natural isomorphisms
  \[
    \varprojlim \Hom(X, I_{a_i}) \simeq \Hom(X, \varprojlim I_{a_i})
    \simeq \Hom(X, Y_{[p]}).
  \]
  Then we obtain the exact sequence
  \[
    0 \To \Hom(N, Y_{[p]}) \To \Hom(M, Y_{[p]}) \To \Hom(L, Y_{[p]}) \To 0.
  \]
  It follows that $Y_{[p]}$ is an indecomposable injective object in $\fp(Q)$.
\end{proof}

Now, we can classify the indecomposable injective objects in $\fp(Q)$.

\begin{theorem}\label{thm:classify}
  Let $Q$ be an interval finite quiver. Assume $I$ is an indecomposable injective object in $\fp(Q)$.
  Then either $I \simeq I_a$ where $a$ admits only finitely many predecessors $b$ and each $b^+$ is finite, or $I \simeq Y_{[p]}$ where $[p]$ is an equivalence class of left infinite paths such that $\supp Y_{[p]}$ is top finite uniformly interval finite and
  \(
    \bigcup_{a \in \supp Y_{[p]}} a^+ \setminus \supp Y_{[p]}
  \)
  is finite.
\end{theorem}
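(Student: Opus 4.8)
The plan is to distinguish two cases according to whether $\supp I$ contains a left infinite path, identify the isomorphism type of $I$ in each, and then read off the finiteness conditions from the fact that $I$, lying in $\fp(Q)$, is finitely presented. If $\supp I$ contains no left infinite path, then Proposition~\ref{prop:inj(Q)} gives $I \simeq \bigoplus_a I_a^{\oplus \dim(\soc I)(a)}$, and since $I$ is indecomposable exactly one summand survives, so $I \simeq I_a$ for some vertex $a$. As $I_a \simeq I$ is then finitely presented, Corollary~\ref{cor:Ia-fp} shows that $a$ admits only finitely many predecessors $b$ and that each $b^+$ is finite, which is the first alternative.

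Suppose now that $\supp I$ contains a left infinite path. My first step is to prove $\soc I = 0$. Indeed, if $0 \neq x \in (\soc I)(a)$, then, as $Q$ has no oriented cycles, the subrepresentation generated by $x$ is the simple $S_a$, so $S_a \hookrightarrow I$. Since $\supp I$ is top finite by Lemma~\ref{lem:fp-supp} and interval finite, it contains no right infinite path by Lemma~\ref{lem:inf-path}, and all predecessors of $a$ lie in $\supp I$ by Corollary~\ref{cor:inj-supp-a-}; counting along the finitely many paths from the top vertices then shows that $a$ has only finitely many predecessors, each with finite $b^+$ by Lemma~\ref{lem:inj-supp-f}(1). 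Hence $I_a$ is finitely presented by Corollary~\ref{cor:Ia-fp} and injective, and $\soc I_a = S_a$ is essential in $I_a$ by Lemma~\ref{lem:soc-rad}. Extending $S_a \hookrightarrow I$ along $S_a \hookrightarrow I_a$ via injectivity of $I$ and invoking essentiality yields a split monomorphism $I_a \hookrightarrow I$, so $I \simeq I_a$ by indecomposability; but then $\supp I$ is finite, contradicting the presence of a left infinite path. Therefore $\soc I = 0$.

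Using $\soc I = 0$, every nonzero element admits an arrow on which its image does not vanish, so starting from any $0 \neq x_0 \in I(a_0)$ I obtain a left infinite path $p = \cdots \alpha_2 \alpha_1$ in $\supp I$ with $a_i = s(\alpha_{i+1})$. Applying Lemma~\ref{lem:inj-supp-p} to $p$ produces a $Z$ beyond which $\supp I$ is a single ray $a_Z \to a_{Z+1} \to \cdots$ along which $I(\alpha_{i+1})$ is bijective, so the subrepresentation $N = \langle I(a_Z) \rangle$ is isomorphic to $P_{a_Z}^{\oplus \dim I(a_Z)}$. I would then identify $Y_{[p]}$ as the injective envelope of this projective-like subobject, matching $\dim I(a) = |[p]_a|$ vertexwise through the inverse system from the proof of Proposition~\ref{prop:Yp-inj}: concretely $\Hom(I, Y_{[p]}) \simeq \varprojlim I(a_i)^{*}$ stabilizes to $I(a_Z)^{*} \neq 0$, giving a natural map $I \to Y_{[p]}$ which I would show is an isomorphism. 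Finally, $Y_{[p]} \simeq I$ is finitely presented, so Lemma~\ref{lem:Yp-fp} gives that $\supp Y_{[p]}$ is top finite uniformly interval finite with $\bigcup_{a \in \supp Y_{[p]}} a^+ \setminus \supp Y_{[p]}$ finite, which is the second alternative.

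The main obstacle is the last step of the second case: verifying that the natural map $I \to Y_{[p]}$ is an isomorphism. Unlike in the first case, here $\soc I = 0$, so there is no socle to serve as a common essential subobject, and one must instead show that the projective-like subobject $N$ is essential in $I$. This fails whenever $\supp I$ carries two inequivalent rays, and it is precisely here that indecomposability must be used in an essential way: by Lemma~\ref{lem:inj-supp-p-finite} the left infinite paths in $\supp I$ meet only finitely many equivalence classes, and I expect indecomposability to force them into a single class $[p]$, so that $N \hookrightarrow I$ becomes essential. The isomorphism $I \simeq Y_{[p]}$ would then follow from uniqueness of injective envelopes in $\fp(Q)$.
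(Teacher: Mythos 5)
Your first case matches the paper's. Your preliminary step in the second case (showing $\soc I = 0$ by splitting off a finitely presented $I_a$ and contradicting the presence of a left infinite path) is correct and is a reasonable addition, although the paper obtains $\soc I = 0$ only as a byproduct at the end. However, the heart of the second case --- producing the isomorphism $I \simeq Y_{[p]}$ --- is exactly the step you leave unproved, and the mechanism you propose for closing it does not work as stated. You want to first use indecomposability of $I$ to force all left infinite paths in $\supp I$ into a single equivalence class, and only then deduce that $N = \langle I(a_Z)\rangle$ is essential in $I$ and conclude by uniqueness of injective envelopes. But indecomposability gives you no direct handle on $\supp I$: the only way it enters is \emph{after} one has exhibited a direct sum decomposition of $I$ whose summands are indexed by the equivalence classes of rays (plus a socle part), at which point indecomposability kills all but one summand. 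Asserting ``I expect indecomposability to force them into a single class'' before any such decomposition exists is circular --- establishing that decomposition is precisely the content of the theorem in this case. There is also a secondary chicken-and-egg problem in your envelope argument: to identify $I$ with $Y_{[p]}$ as injective envelopes of $N$ you need $Y_{[p]}$ to be a finitely presented injective object already, but you only plan to verify the hypotheses of Lemma~\ref{lem:Yp-fp} \emph{after} the isomorphism is in hand.

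The paper's proof supplies exactly the missing construction. It takes the finite set $\Delta$ of left infinite paths with source minimal in $\supp I$ (Lemma~\ref{lem:inj-supp-p-finite}), lets $M$ be the subrepresentation generated by $I(a_{p,Z_p+1})$ for \emph{all} $p \in \Delta$ simultaneously, and observes that $I/M$ is finitely presented by Lemma~\ref{lem:fp-M/N} and injective because $\fp(Q)$ is hereditary. Since $\supp(I/M)$ then contains no left infinite paths, Proposition~\ref{prop:inj(Q)} decomposes $I/M$ into copies of the $I_b$, and this decomposition is extended back to
\[
  I \simeq
  \biggl(\bigoplus_{b} I_b^{\oplus \dim (\soc I)(b)}\biggr)
  \oplus
  \biggl(\bigoplus_{p \in \Delta} Y_{[p]}^{\oplus \dim I(a_{p,Z_p})}\biggr),
\]
after which indecomposability is invoked. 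If you want to salvage your route, you must either reproduce this quotient-and-lift argument or find an independent proof that an indecomposable injective with trivial socle supports only one equivalence class of rays; as it stands, that claim is a genuine gap, not a routine verification.
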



\begin{proof}
  If $\supp I$ contains no left infinite paths, Proposition~\ref{prop:inj(Q)} implies that $I \simeq I_a$ for some vertex $a$.
  Corollary~\ref{cor:Ia-fp} implies that $a$ admits only finitely many predecessors $b$ and each $b^+$ is finite.

  Now, we assume $\supp I$ contains some left infinite paths.
  Let $\Delta$ be the set of left infinite paths $p$ contained in $\supp I$ with $s(p)^- \cap \supp I = \emptyset$. It follows from Lemma~\ref{lem:inj-supp-p-finite} that $\Delta$ is finite.

  For every $p \in \Delta$, we assume
  \(
    p = \cdots \alpha_{p,j} \cdots \alpha_{p,2} \alpha_{p,1}
  \).
  Set $a_{p,j} = s(\alpha_{p,j+1})$ for each $j \geq 0$.
  By Lemma~\ref{lem:inj-supp-p}, there exists some nonnegative integer $Z_p$ such that $a_{p,j}^+ = \set{a_{p,j+1}}$, $a_{p,j+1}^- = \set{a_{p,j}}$ and $I(\alpha_{p,j+1})$ is a bijection for any $j \geq Z_p$.

  Consider the subrepresentation $M$ of $I$ generated by $I(a_{p, Z_p+1})$ for all $p \in \Delta$. It follows from Lemma~\ref{lem:fp-M/N} that $I/M$ is finitely presented. It is an injective object in $\fp(Q)$, since $\fp(Q)$ is hereditary.

  We observe by Lemma~\ref{lem:inj-supp-p-finite} that $\supp (I/M)$ contains no left infinite paths.
  Indeed, assume
  \(
    \cdots \alpha_i \cdots \alpha_2 \alpha_1
  \)
  is a left infinite path in $\supp (I/M)$. It also lies in $\supp I$. Then these $\alpha_i$ for $i$ large enough lie in $\supp M$. Therefore, they do not lie in $\supp (I/M)$, which is a contradiction.

  It follows from Proposition~\ref{prop:inj(Q)} that
  \[
    I/M \simeq \bigoplus_{b \in Q_0} I_b^{\oplus \dim ( \soc (I/M) ) (b)}.
  \]
  For any $p \in \Delta$, we observe that
  \(
    ( \soc (I/M) ) (a_{p, Z_p}) = I(a_{p, Z_p}) \neq 0
  \)
  and $I(\alpha_{p,j+1})$ is a bijection for any $j \geq Z_p$.
  The previous isomorphism can be extended as
  \[
    I \simeq
    \biggl(
      \bigoplus_{b \in Q_0} I_b^{\oplus \dim (\soc I) (b)}
    \biggr)
    \oplus
    \biggl(
      \bigoplus_{p \in \Delta} Y_{[p]}^{\oplus \dim I(a_{p, Z_p})}
    \biggr).
  \]

  Since $I$ is indecomposable, then $\Delta$ contains only one left infinite path $p$ and $\dim I(a_{p, Z_p}) = 1$. Then $\soc I = 0$ and $I \simeq Y_{[p]}$.
  It follows from Lemma~\ref{lem:Yp-fp} that $\supp Y_{[p]}$ is top finite and uniformly interval finite, and
  \(
    \bigcup_{a \in \supp Y_{[p]}} a^+ \setminus \supp Y_{[p]}
  \)
  is finite.
\end{proof}

\begin{remark}\label{rmk:spectroid}
  Let $\mathcal{C}$ be a $k$-linear \emph{spectroid}, i.e., a Hom-finite category whose objects are pairwise non-isomorphic with local endomorphism rings.
  Assume $k$ is algebraically closed, and the infinite radical of $\mathcal{C}$ vanishes, and the category of modules over $\mathcal{C}$ is hereditary.
  Then the quiver of $\mathcal{C}$ is interval finite. It can be viewed as a category naturally, and its $k$-linearization is precisely $\mathcal{C}$;
  see \cite[Sections~8.1 and 8.2]{GabrielRoiter1992Representations} for more details.
  Therefore, Theorem~\ref{thm:classify} can be applied to the category of finitely presented modules over $\mathcal{C}$.
\end{remark}

\section{Examples}

Let $k$ be a field. We will give some examples.

\begin{example}
  Assume $Q$ is the following quiver.
  \[\begin{tikzcd}
    \cdots
    & \underset3\circ \lar["\alpha_4"']
    & \underset2\circ \lar["\alpha_3"']
    & \underset1\circ \lar["\alpha_2"']
    & \underset0\circ \lar["\alpha_1"']
  \end{tikzcd}\]

  For each $n \geq 0$, we consider the representation $I_n$.
  We observe that the predecessors of $n$ are $i$ for $0 \leq i \leq n$, and $i^+$ is finite.
  Corollary~\ref{cor:Ia-fp} implies that $I_n$ is finitely presented.

  Let
  \(
    p = \cdots \alpha_i \cdots \alpha_2 \alpha_1
  \).
  Then $\supp Y_{[p]} = Q$. We observe that $\supp Y_{[p]}$ is top finite uniformly interval finite and
  \(
    \bigcup_{a \in Q_0} a^+ \setminus Q_0
  \)
  is the empty set. Lemma~\ref{lem:Yp-fp} implies that $Y_{[p]}$ is finitely presented.

  We observe by Theorem~\ref{thm:classify} that
  \[
    \set{I_n | n \geq 0} \cup \set{Y_{[p]}}
  \]
  is a complete set of indecomposable injective objects in $\fp(Q)$.
\end{example}

\begin{example}
  Assume $Q$ is the following quiver.
  \[\begin{tikzcd}
    \cdots
    & \underset1\circ \lar["\alpha_2"']
    & \underset0\circ \lar["\alpha_1"']
    & \underset{-1}\circ \lar["\alpha_0"']
    & \cdots \lar["\alpha_{-1}"']
  \end{tikzcd}\]

  For each integer $n$, we consider the representation $I_n$.
  We observe that all $i \leq n$ are predecessors of $n$.
  Corollary~\ref{cor:Ia-fp} implies that $I_n$ is not finitely presented.

  Let
  \(
    p = \cdots \alpha_i \cdots \alpha_2 \alpha_1
  \).
  Then $\supp Y_{[p]} = Q$, which contains a right infinite path
  \(
    \alpha_{-1} \alpha_{-2} \cdots \alpha_{-i} \cdots
  \).
  Then it is not top finite by Lemma~\ref{lem:inf-path}. It follows from Lemma~\ref{lem:Yp-fp} that $Y_{[p]}$ is not finitely presented.

  We observe by Theorem~\ref{thm:classify} that $\fp(Q)$ contains no nonzero injective objects.
\end{example}

\begin{example}
  Assume $Q$ is the following quiver.
  \[\begin{tikzcd}[sep = 2.5em]
    &&&&& \overset0\circ \\
    \cdots & \underset{i}\circ & \cdots & \underset3\circ & \underset2\circ & \underset1\circ
    \ar[from = 1-6, to = 2-1, out = 180, in = 20, phantom, "\cdots" pos = 0.8]
    \ar[from = 1-6, to = 2-2, out = 180, in = 30, "\alpha_i"' near end]
    \ar[from = 1-6, to = 2-3, out = 180, in = 30, phantom, "\cdots" pos = 0.8]
    \ar[from = 1-6, to = 2-4, out = 190, in = 45, "\alpha_3" pos = 0.63]
    \ar[from = 1-6, to = 2-5, out = 205, in = 60, "\alpha_2"]
    \ar[from = 1-6, to = 2-6, "\alpha_1"]
    \ar[from = 2-2, to = 2-1, "\beta_{i+1}"]
    \ar[from = 2-3, to = 2-2, "\beta_i"]
    \ar[from = 2-4, to = 2-3, "\beta_4"]
    \ar[from = 2-5, to = 2-4, "\beta_3"]
    \ar[from = 2-6, to = 2-5, "\beta_2"]
  \end{tikzcd}\]
  We mention that $Q$ is interval finite, but not \emph{locally finite} (i.e., for any vertex $a$, the set of arrows $\alpha$ with $s(\alpha) = a$ or $t(\alpha) = a$ is finite).

  For each $n \geq 0$, we consider the representation $I_n$.
  We observe that the set of predecessors of $n$ is $\set{0 \leq i \leq n}$, but $0^+$ is not finite.
  Then Corollary~\ref{cor:Ia-fp} implies that $I_n$ is not finitely presented.

  Let
  \(
    p = \cdots \beta_i \cdots \beta_3 \beta_2
  \).
  Then $\supp Y_{[p]} = Q$, which is not uniformly interval finite. Lemma~\ref{lem:Yp-fp} implies that $Y_{[p]}$ is not finitely presented.

  We observe by Theorem~\ref{thm:classify} that $\fp(Q)$ contains no nonzero injective objects.
\end{example}


\begin{example}
  Assume $Q$ is the following quiver.
  \[\begin{tikzcd}
    \cdots
    & \overset{a_i}{\circ} \dar["\gamma_i"'] \lar["\alpha_{i+1}"']
    & \cdots \lar["\alpha_i"']
    & \overset{a_2}\circ \dar["\gamma_2"'] \lar["\alpha_3"']
    & \overset{a_1}\circ \dar["\gamma_1"'] \lar["\alpha_2"']
    & \overset{a_0}\circ \dar["\gamma_0"'] \lar["\alpha_1"']
    \\
    \cdots
    & \underset{b_i}{\circ} \lar["\beta_{i+1}"]
    & \cdots \lar["\beta_i"]
    & \underset{b_2}\circ \lar["\beta_3"]
    & \underset{b_1}\circ \lar["\beta_2"]
    & \underset{b_0}\circ \lar["\beta_1"]
  \end{tikzcd}\]

  For each $n \geq 0$, we consider the representations $I_{a_n}$ and $I_{b_n}$.
  We observe that the set of predecessors of $a_n$ is
  \(
    \set{a_i | 0 \leq i \leq n}
  \),
  and the one of $b_n$ is
  \(
    \set{a_i | 0 \leq i \leq n} \cup \set{b_i | 0 \leq i \leq n}
  \).
  Since each $a_i^+$ and $b_i^+$ are both finite, Corollary~\ref{cor:Ia-fp} implies that $I_{a_n}$ and $I_{b_n}$ are finitely presented.

  Let
  \(
    p = \cdots \alpha_i \cdots \alpha_2 \alpha_1
  \).
  Then $\supp Y_{[p]}$ is the full subquiver of $Q$ formed by $a_i$ for all $i \geq 0$.
  We observe that
  \(
    \bigcup_{a \in \supp Y_{[p]}} a^+ \setminus \supp Y_{[p]}
  \)
  contains all $b_i$ and then is infinite.
  Lemma~\ref{lem:Yp-fp} implies that $Y_{[p]}$ is not finitely presented.

  Let
  \(
    q = \cdots \beta_i \cdots \beta_2 \beta_1
  \).
  Then $\supp Y_{[q]} = Q$. Since $Q$ is not uniformly interval finite, then $Y_{[q]}$ is not finitely presented by Lemma~\ref{lem:Yp-fp}.

  We observe that $\set{[p], [q]}$ is the set of equivalence classes of left infinite paths.
  It follows from Theorem~\ref{thm:classify} that
  \[
    \set{I_{a_i} | i \geq 0} \cup \set{I_{b_i} | i \geq 0}
  \]
  is a complete set of indecomposable injective objects in $\fp(Q)$.
%
%
\end{example}

\begin{example}
  Assume $Q$ is the following quiver.
  \[\begin{tikzcd}
    \cdots
    & \overset{a_2}\circ \lar["\alpha_3"']
    & \overset{a_1}\circ \lar["\alpha_2"'] \dar["\gamma_1"']
    & \overset{a_0}\circ \lar["\alpha_1"'] \dar["\gamma_0"]
    \\
    \cdots
    & \underset{b_2}\circ \lar["\beta_3"]
    & \underset{b_1}\circ \lar["\beta_2"]
    & \underset{b_0}\circ \lar["\beta_1"]
    & \underset{b_{-1}}\circ \lar["\beta_0"]
    & \cdots \lar["\beta_{-1}"]
  \end{tikzcd}\]

  We observe that $a^+$ is finite for any $a \in Q_0$.
  Consider the representations $I_{a_i}$ for $i \geq 0$ and $I_{b_j}$ for $j \in \mathbb{Z}$.
  The set of predecessors of $a_i$ is finite and the one of $b_j$ is not. It follows from Corollary~\ref{cor:Ia-fp} that $I_{a_i}$ is finitely presented, while $I_{b_j}$ is not.

  Let
  \(
    p = \cdots \alpha_i \cdots \alpha_2 \alpha_1
  \).
  Then $\supp Y_{[p]}$ is the full subquiver of $Q$ formed by $a_i$ for all $i \geq 0$.
  We observe that $\supp Y_{[p]}$ is top finite uniformly interval finite, and
  \(
    \bigcup_{a \in \supp Y_{[p]}} a^+ \setminus \supp Y_{[p]} = \set{b_0, b_1}
  \).
  It follows from Lemma~\ref{lem:Yp-fp} that $Y_{[p]}$ is finitely presented.

  Let
  \(
    q = \cdots \beta_i \cdots \beta_2 \beta_1
  \).
  Then $\supp Y_{[q]}$ is the full subquiver of $Q$ formed by $a_0$, $a_1$ and $b_j$ for all $j \in \mathbb{Z}$.
  We observe that $\supp Y_{[q]}$ contains a right infinite path
  \(
    \beta_{-1} \beta_{-2} \cdots \beta_{-i} \cdots
  \).
  Then it is not top finite by Lemma~\ref{lem:inf-path}. It follows from Lemma~\ref{lem:Yp-fp} that $Y_{[q]}$ is not finitely presented.

  We observe that $\set{[p], [q]}$ is the set of equivalence classes of left infinite paths.
  It follows from Theorem~\ref{thm:classify} that
  \[
    \set{I_{a_i} | i \geq 0} \cup \set{Y_{[p]}}
  \]
  is a complete set of indecomposable injective objects in $\fp(Q)$.
\end{example}

\section*{Acknowledgements}

The author is very grateful to the referee for many helpful suggestions and comments,
especially the spectroid point of view in Remark~\ref{rmk:spectroid} and the simplifications of some proofs.



\end{document}